\documentclass[11pt]{article}

\usepackage{amssymb}
\usepackage{amsthm}
\usepackage{amsmath}
\newtheorem{theorem}{Theorem}[section]
\newtheorem{remark}{Remark}[section]
\RequirePackage{algorithm}
\RequirePackage[noend]{algpseudocode}
\usepackage{xcolor}
\usepackage{silence}
\usepackage{hyperref}
\WarningFilter{latex}{Text page 7 contains only floats}

\newtheorem{prop}{Proposition}[section]
\newtheorem{lem}{Lemma}[section]

\def\ni{\noindent}

\def\ni{\noindent}


\begin{document}







            	\renewcommand{\thefootnote}{\arabic{footnote}}
	\begin{center}
		{\Large \textbf{ Szeg\H{o} Theorem for Operator Orthogonal Polynomials}} \\[0pt]
		~\\[0pt] Badr Missaoui \footnote[1]{Mohammed VI Polytechnic University, Morocco. E-mail: \texttt{badr.missaoui@um6p.com}}, Nicholas H. Bingham{\footnote[2]{Department of Mathematics, Imperial College London, UK.
				E-mail: \texttt{n.bingham@ic.ac.uk}}}
		\\[0pt]

		~\\[0pt]
	\end{center}

\begin{abstract}
We develop a Szeg\H{o} theory for operator-valued measures on the unit circle, extending the framework of matrix orthogonal polynomials to the infinite-dimensional setting. Our approach yields a new and direct proof of the operator Szeg\H{o} limit theorem based solely on orthogonal-polynomial theory.
\end{abstract}

{\bf Keyword}: Moment problem, orthogonal polynomials, Szeg\H{o} theory.

\section{Introduction}
The classical Szeg\H{o} limit theorem, originally proven in \cite{Sze}, describes the asymptotic behavior of Toeplitz determinants and plays a fundamental role in the spectral theory of Toeplitz operators and orthogonal polynomials on the unit circle. Specifically, if $\mu$ is a probability measure on the unit circle such that $\log \mu' \in L^{1}$, then  
\begin{equation}
\lim_{n \rightarrow{ \infty}}\frac{\mathrm{det}(T_{n})}{\mathrm{det}(T_{n-1})}=\prod_0^{\infty} (I - {\alpha}_k {\alpha}_k^{\dag})
= \exp \int\ \log \mu'(\theta) d \theta /2 \pi ,
\end{equation}
where $T_n=[\mu_{i-j}]_{i,j=0}^n$ and $\{\mu_k\}_{k \in \mathbb{Z}}$ are the Fourier coefficients of $\mu$, $\alpha_k$ is the \textit{k-th} Verblunsky coefficient associated with $\mu$, and ${\alpha}_k^{\dag}$ is its Hermitian conjugate. This result, deeply connected to orthogonal polynomials, has been extensively generalized in various settings, including matrix-valued orthogonal polynomials \cite{DerHK,DelGK}.  

In this paper, we extend this framework to the setting of operator orthogonal polynomials, providing a Szeg\H{o}-type limit theorem in infinite-dimensional Hilbert spaces. The study of operator-valued polynomials is not merely of theoretical interest; it has significant implications in prediction theory, particularly in the analysis of continuous stochastic processes and large collections of time series \cite{Bos, DugBM}. The use of operator-valued functions and orthogonal polynomials in prediction theory dates back to the foundational works of Mandrekar and Salehi \cite{ManS} and Weron \cite{Wer}, who developed the theory of dilation and shift operators for nonstationary processes. A key concept in this area is the \textit{Kolmogorov decomposition}, which expresses a positive definite kernel as the inner product of two operator-valued sequences, enabling the analysis of nonstationary processes through dilation and shift operators.  \\

Operator orthogonal polynomials extend the concept of matrix-valued \textit{orthogonal} polynomials by considering polynomials whose coefficients are bounded operators on a Hilbert space rather than scalars or matrices. Formally, let $\mathcal{H}$ be a separable Hilbert space, and let $\mathcal{L}(\mathcal{H})$ denote the space of bounded linear operators on $\mathcal{H}$. Given a positive, self-adjoint operator-valued measure $\mu$, we consider a sequence of operator-valued polynomials $P_n(z)$, defined on the unit circle $\mathbb{T}$, with  
\[
P_n(z) = \sum_{k=0}^{n} a_k z^k, \quad a_k \in \mathcal{L}(\mathcal{H}).
\]  
These polynomials are said to be orthogonal with respect to $\mu$ if  
\begin{align*}  
\langle P_n(z), P_m(z) \rangle_{\mu} = 0, \quad \text{for } n \neq m,  
\end{align*}  
where the inner product is defined as  
\[
\langle f, g \rangle_{\mu} := \int_{\mathbb{T}} f(z)^{\dag} d\mu(z) g(z).
\]  
Here, $\mathbb{T}$ denotes the unit circle in the complex plane, and $\mu$ is a non-trivial operator-valued measure, i.e. one not supported on a finite set. The study of such polynomials is closely related to dilation theory, which seeks to extend a given operator to a larger space while preserving its essential properties. This theory, pioneered by Sz.-Nagy and Foia\c{s} \cite{SzNF} and further developed by Nikolskii \cite{Nik2}, has been applied to operator-valued functions by Gorniak and Weron \cite{GorW} and Weron \cite{Wer}.  \\

The generalization of the Szeg\H{o} limit theorem to operator settings have previously been established. Gohberg and Kaashoek \cite{GohK} provided an operator generalization using factorization arguments and Schur complement techniques, focusing on a class of positive block Toeplitz operators with Hilbert-Schmidt entries. Their results relied on the second regularized and Perelson determinants in place of the classical determinant. Subsequent research by B\"ottcher and Silbermann \cite{BotS} removed the self-adjointness requirement on the block Toeplitz matrices and relaxed the smoothness conditions imposed by Gohberg and Kaashoek. \\ 

The novelty of the present work lies instead in the method of proof and structural viewpoint. We provide a derivation the operator Szeg\H{o} limit theorem that relies exclusively on operator-valued orthogonal polynomials on the unit circle. Our method makes use of the moments of the operator-valued measure to represent operator orthogonal polynomials in terms of Schur complements (see, e.g., \cite{Zha}). The core of our method closely follows the matrix-valued results developed in Simon's monograph \cite[Section 2.13]{Sim2}, as well as in \cite{DamPS} and \cite{DerHK}, allowing a direct extension of the matrix theory to the operator setting. This leads naturally to operator analogues of several classical results, including:
\begin{itemize}
    \item Classical recursion relations for operator orthogonal polynomials,
    \item Operator versions of the kernel polynomials and Christoffel-Darboux formulas, and
    \item A new perspective on the Bernstein-Szeg\H{o} approximation in the operator setting.
    \item A new proof of the operator Szeg\H{o} limit theorem. Our decisive result (Theorems 7.1 and 7.2) provides an operator analogue of the classical Szeg\H{o} limit theorem.
\end{itemize}

The structure of this paper is as follows: Section 2 introduces the fundamental concepts and notation essential for our analysis of operator orthogonal polynomials. In Section 3, we define and establish the basic properties of operator-valued counterparts to matrix-valued orthogonal polynomials on the unit circle. Sections 4 through 6 develop the operator versions of the Verblunsky recurrence relations, the Christoffel-Darboux formulas, and the Bernstein-Szeg\H{o} approximation. Section 7 presents the proof of the operator Szeg\H{o} limit theorem within the framework of matrix orthogonal polynomials on the unit circle.

\section{Preliminaries} 
In this section, we study the fundamental properties of positive operator-valued measures, their spectral representations, and connection to stationary processes.\\

Let $\mathcal{B}(\mathbb{T})$ denote the class of all Borel measurable subsets of the unit circle $\mathbb{T}$. We define $\mathcal{L}(\mathcal{H},\mathcal{K})$ as the algebra of all bounded linear operators from $\mathcal{H}$ to $\mathcal{K}$ where $\mathcal{H}$ and $\mathcal{K}$ are separable spaces. When $\mathcal{H} = \mathcal{K}$, we abbreviate this to $\mathcal{L}(\mathcal{H})$. Recall that $A \in \mathcal{L}(\mathcal{H})$ is positive definite $(A>0)$ if $\langle A x, x \rangle >0$ for $0 \neq x \in \mathcal{H}$, and $A$ has a bounded inverse.

\subsection{Operator-Valued Measures}

A function $\mu: \mathcal{B}(\mathbb{T}) \to \mathcal{L}(\mathcal{H})$ is called a \textit{positive operator-valued measure} (or \textit{semi-spectral measure}) on $\mathbb{T}$ if, for each $ h \in \mathcal{H}$, the scalar measure
\[
\mu_h(\sigma) = \langle \mu(\sigma) h, h \rangle,
\]
is a positive Borel measure on $\mathbb{T}$.

It can be shown that for each $ j \in \mathbb{Z}$, there exists an operator $ \mu(j) \in \mathcal{L}(\mathcal{H}) $, known as the \textit{$j$th moment} of $\mu$, such that for each $ h \in \mathcal{H}$,
\[
\langle \mu(j) h, h \rangle = \int_{\mathbb{T}} e^{i j t} d\mu_h(t).
\]

We assume that $ \mu $ is \textit{absolutely continuous} with respect to the Lebesgue measure on $\mathbb{T}$. That is, there exists a weakly measurable function $ \mu' : \mathbb{T} \to \mathcal{L}(\mathcal{H}) $, such that $ \mu'(e^{it}) \geq 0 $ almost everywhere on $\mathbb{T}$, and for all $ h \in \mathcal{H} $,
\[
 d \langle \mu(t) h, h \rangle = \langle \mu'(e^{it}) h, h \rangle dt.
\]
The $ j $th moment of $ \mu $ then corresponds to the Fourier coefficient $ \widehat{\mu'}(j) $ of $ \mu' $, leading to the expansion
\[
 \mu'(e^{it}) = \sum_{j=-\infty}^{\infty} \mu(j) e^{ijt}.
\]

\subsection{Spectral Representation and Stationary Processes}

Let $ X = (X_t)_{t \in G} $ be a stationary stochastic process indexed by a locally compact abelian group $ G $, satisfying the covariance condition:
\[
 C(t, s) = \mathbb{E}(X_s^* X_t) = C'(t - s).
\]
By the \textit{classical spectral theorem} \cite{DunSa}, there exists a unique unitary representation $ U : G \to \mathcal{L}(\mathcal{H}) $ satisfying
\[
 X_t = U_t X_0.
\]
The operator $ U_t $ is called the \textit{shift operator}, which captures the evolution of the process over time. By \textit{Stone’s theorem} \cite{Sto}, the shift operator has a spectral representation:
\[
 U = \int_{\mathbb{T}} e^{i\lambda} E(d\lambda),
\]
where $ E $ is an \textit{operator-valued spectral measure} on $\mathbb{T}$. Consequently, the process $ X_t $ admits the integral representation
\[
 X_t = U^t X_0 = \int_{\mathbb{T}} e^{it\lambda} E(d\lambda) X_0 = \int_{\mathbb{T}} e^{it\lambda} \xi(d\lambda),
\]
where $ \xi(\Delta) = E(\Delta) X_0 $ is a measure-valued process.

If $ G = \mathbb{Z} $, the covariance function satisfies
\[
 C_n = \langle X_{m+n}, X_m \rangle = \int_{\mathbb{T}} e^{-in\lambda} \left\langle \xi(d\lambda), \xi(d\lambda) \right\rangle.
\]
By uniqueness of the Fourier transform, the spectral measure $ \mu $ satisfies:
\[
 \mu(\Delta) = X_0^* E(\Delta) X_0.
\]

\subsection{Kolmogorov Isomorphism and Prediction Theory}

A key result in stochastic prediction is the \textit{Kolmogorov Isomorphism Theorem}, which asserts the existence of a unitary equivalence between the process $ X_n $ and a weighted shift operator on $ L^2(\mu, \mathcal{H}) $. The correspondence
\[
 X_n \leftrightarrow e^{-in\cdot} I, \quad n\in \mathbb{Z},
\]
where $ I $ is the identity operator on $ \mathcal{H} $, reformulates the Kolmogorov-Wiener prediction problem as an approximation problem in $ L^2(\mu, \mathcal{H}) $.\\

From (\cite{ManS} Theorem 4.19), the space $ L^2(\mu, \mathcal{H}) $ consists of operator-valued functions satisfying
\[
 \text{tr}  \int_{\mathbb{T}} f^\dagger(x) \mu(dx) f(x)  < \infty,
\]
equipped with the inner products
\[
 \left\langle \left\langle f, g \right\rangle \right\rangle_R = \int_{\mathbb{T}} f^\dagger(x) \mu(dx) g(x), \quad \left\langle \left\langle f, g \right\rangle \right\rangle_L = \int_{\mathbb{T}} g(x) \mu(dx) f^\dagger(x).
\]

\noindent An operator-valued measure is called non-trivial if $$\text{tr} \left\langle \left\langle f, g \right\rangle \right\rangle_R >0,$$ for every non-zero polynomial $f$. Here, \text{tr} is the trace.

\noindent Moreover, 
$$\left\langle \left\langle f, g \right\rangle \right\rangle_\mathrm{L} = \left\langle \left\langle g, f \right\rangle \right\rangle_\mathrm{L}^{\dag}, ~~~
\left\langle \left\langle f, g \right\rangle \right\rangle_\mathrm{R} = \left\langle \left\langle g, f \right\rangle \right\rangle_\mathrm{R}^{\dag}$$
$$\left\langle \left\langle f^{*}, g^{*} \right\rangle \right\rangle_\mathrm{L} = \left\langle \left\langle g, f \right\rangle \right\rangle_\mathrm{R}^{\dag}, ~~~
\left\langle \left\langle f^{*}, g^{*} \right\rangle \right\rangle_\mathrm{R} = \left\langle \left\langle g, f \right\rangle \right\rangle_\mathrm{L}^{\dag}$$
$$\left\langle \left\langle f, \alpha g \right\rangle \right\rangle_\mathrm{L} = \alpha \left\langle \left\langle f, g \right\rangle \right\rangle_\mathrm{L}^{\dag}, ~~~
\left\langle \left\langle \alpha f, g \right\rangle \right\rangle_\mathrm{L} = \left\langle \left\langle f, g \right\rangle \right\rangle_\mathrm{L} \alpha ^{\dag}$$
$$\left\langle \left\langle f, g \alpha \right\rangle \right\rangle_\mathrm{R} = \left\langle \left\langle f, g \right\rangle \right\rangle_\mathrm{R}\alpha, ~~~
\left\langle \left\langle f, \alpha g \right\rangle \right\rangle_\mathrm{R} = \alpha^{\dag} \left\langle \left\langle f, g \right\rangle \right\rangle_\mathrm{R}.$$

Consider a family of Hilbert spaces \( \mathbf{H} = \{ \mathcal{H}_n \}_{n \in \mathbb{Z}} \), where each \( \mathcal{H}_n \) is a separable Hilbert space. A function \( C: \mathbb{Z} \times \mathbb{Z} \to \mathcal{L}(\mathcal{H}_j, \mathcal{H}_i) \) is called a \textit{positive definite kernel} if it satisfies the condition
\[
\sum_{i,j} \langle C_{i,j} h_j, h_i \rangle \geq 0,
\]
for all finitely supported sequences \( \{ h_n \}_{n \in \mathbb{Z}} \) in the Hilbert direct sum \( \bigoplus_{n \in \mathbb{Z}} \mathcal{H}_n \). \\

The following theorem establishes a fundamental link between shift operators and positive definite sequences of operators. For further background, see \cite{Con}.

\begin{theorem} \label{thm:kolmoDecomp}[Kolmogorov decomposition]
Let \( C \) be a positive definite kernel. Then there exist a Hilbert space \( \mathcal{K} \) and a collection of bounded operators \( V_n \in \mathcal{L}(\mathcal{H}_n, \mathcal{K}) \) such that:
\begin{itemize}
    \item \( C_{i,j} = V_i^* V_j \), for all \( i,j \in \mathbb{Z} \);
    \item \( \mathcal{K} = \bigvee_{n \in \mathbb{Z}} V_n \mathcal{H}_n \), where \( \bigvee \) denotes the closed linear span.
\end{itemize}
\end{theorem}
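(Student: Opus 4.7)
The plan is to carry out a Gelfand--Naimark--Segal style construction adapted to this operator-valued setting, where the kernel $C$ plays the role of the GNS functional. The positive definiteness of $C$ supplies the positive semi-definite form needed, while the Toeplitz hypothesis is not actually required for the existence part of the statement (it would, however, be used to equip $\mathcal{K}$ with the unitary shift that connects this construction to Stone's theorem and the spectral picture in Section~2.2).

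First I would form the algebraic direct sum
\[
\mathcal{D} = \bigoplus_{n \in \mathbb{Z}}^{\text{alg}} \mathcal{H}_n
\]
of finitely supported sequences $h = (h_n)$, and define on it the sesquilinear form
\[
[h, g] := \sum_{i,j} \langle C_{i,j} h_j, g_i \rangle.
\]
By the positive definiteness assumption, $[\cdot,\cdot]$ is positive semi-definite, so it satisfies the Cauchy--Schwarz inequality. Let $\mathcal{N} = \{h \in \mathcal{D} : [h,h] = 0\}$; Cauchy--Schwarz implies $\mathcal{N}$ is a linear subspace and that $[\cdot,\cdot]$ descends to a genuine inner product on the quotient $\mathcal{D}/\mathcal{N}$. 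Let $\mathcal{K}$ denote the Hilbert space completion of $\mathcal{D}/\mathcal{N}$, and write $[h]$ for the equivalence class of $h$ in $\mathcal{K}$.

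Next, for each $n \in \mathbb{Z}$ and $h \in \mathcal{H}_n$, I would define $V_n h := [\iota_n h]$, where $\iota_n h$ is the sequence supported at index $n$ with value $h$. A direct computation gives
\[
\|V_n h\|_{\mathcal{K}}^2 = [\iota_n h, \iota_n h] = \langle C_{n,n} h, h \rangle \le \|C_{n,n}\| \, \|h\|^2,
\]
so $V_n \in \mathcal{L}(\mathcal{H}_n, \mathcal{K})$. For $h \in \mathcal{H}_j$ and $g \in \mathcal{H}_i$,
\[
\langle V_j h, V_i g \rangle_{\mathcal{K}} = [\iota_j h, \iota_i g] = \langle C_{i,j} h, g \rangle,
\]
which yields $V_i^* V_j = C_{i,j}$. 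The spanning property $\mathcal{K} = \bigvee_{n \in \mathbb{Z}} V_n \mathcal{H}_n$ is immediate, because the classes $[\iota_n h]$ are precisely the images of the generators of $\mathcal{D}$, and $\mathcal{D}/\mathcal{N}$ is dense in $\mathcal{K}$ by construction.

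The only delicate step is confirming that the quotient-and-complete procedure is legitimate for an operator-valued positive definite kernel on an infinite index set; the subtlety is that sums in $[h,g]$ are finite (since sequences are finitely supported), so no convergence issue arises at that stage. All remaining verifications are routine once the semi-inner product is in place. The construction is essentially unique up to unitary equivalence respecting the $V_n$, though this is not part of the statement and I would only remark on it if needed downstream.
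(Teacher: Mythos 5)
Your construction is correct and complete. The paper itself gives no proof of this theorem --- it is stated as a known result with a pointer to Constantinescu's monograph --- and the GNS-style argument you give (semi-inner product on the algebraic direct sum of finitely supported sequences, quotient by the null space via Cauchy--Schwarz, completion, and the maps $V_n h = [\iota_n h]$) is precisely the standard proof found there, so there is nothing to compare against in the paper's text. All the verifications you record check out: positive definiteness of $C$ as defined in the paper is exactly positive semi-definiteness of your form $[\cdot,\cdot]$, the bound $\|V_n h\|^2 = \langle C_{n,n}h,h\rangle \le \|C_{n,n}\|\,\|h\|^2$ gives boundedness, and the identity $[\iota_j h, \iota_i g] = \langle C_{i,j}h, g\rangle$ yields $V_i^* V_j = C_{i,j}$. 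Your side observation that the Toeplitz hypothesis is not needed for existence is also correct and in fact consistent with the paper's own discussion, which presents the Kolmogorov decomposition as the tool for \emph{non}-stationary (non-Toeplitz) kernels and reserves the Toeplitz structure for the Naimark dilation in the following theorem; the word ``Toeplitz'' in the statement of this theorem appears to be a slip, since the general positive definite kernel is what is intended.
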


A particularly important case occurs when the family \( \mathbf{H} \) reduces to a single Hilbert space, i.e., when \( \mathcal{H}_n = \mathcal{H} \) for all \( n \in \mathbb{Z} \), and the positive definite kernel \( C \) satisfies the property:
\[
C_{i,j} = T_{j-i},
\]
for some function \( T: \mathbb{Z} \to \mathcal{L}(\mathcal{H}) \). In this case, \( C \) is referred to as a \textit{positive definite Toeplitz kernel}.

\begin{theorem} \label{thm:naimark}[Naimark dilation]
Let \( C \) be a positive definite Toeplitz kernel. Then there exist a Hilbert space \( \mathcal{K} \), a unitary operator \( S \in \mathcal{L}(\mathcal{K}) \), and a bounded operator \( Q \in \mathcal{L}(\mathcal{H}, \mathcal{K}) \) such that:
\begin{itemize}
    \item \( C_{i,j} = Q^* S^{j-i} Q \), for all \( i,j \in \mathbb{Z} \);
    \item \( \mathcal{K} = \bigvee_{n \in \mathbb{Z}} S^n Q \mathcal{H}\).
\end{itemize}
\end{theorem}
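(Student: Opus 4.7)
The plan is to build the Naimark dilation directly out of the Kolmogorov decomposition of Theorem~\ref{thm:kolmoDecomp}. Since $C$ is a positive definite Toeplitz kernel on a single Hilbert space $\mathcal{H}$, Theorem~\ref{thm:kolmoDecomp} produces a Hilbert space $\mathcal{K}$ and operators $V_n \in \mathcal{L}(\mathcal{H}, \mathcal{K})$ such that $C_{i,j} = V_i^* V_j$ and $\mathcal{K} = \bigvee_{n \in \mathbb{Z}} V_n \mathcal{H}$. The idea is that the Toeplitz condition $C_{i,j} = T_{j-i}$ forces these operators to be related by a single unitary shift on $\mathcal{K}$.

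Concretely, I would define a linear operator $S$ on the dense subspace $\mathcal{K}_0 := \mathrm{span}\{V_n h : n \in \mathbb{Z},\, h \in \mathcal{H}\}$ by setting $S(V_n h) := V_{n+1} h$ and extending by linearity. The key computation is that for any finitely supported family $\{h_n\}$,
\[
\Bigl\| \sum_n V_{n+1} h_n \Bigr\|^2 = \sum_{n,m} \langle V_{m+1}^{*} V_{n+1} h_n, h_m \rangle = \sum_{n,m} \langle T_{n-m} h_n, h_m \rangle = \Bigl\| \sum_n V_n h_n \Bigr\|^2,
\]
where the middle equality uses the Toeplitz identity $V_{m+1}^{*}V_{n+1} = T_{n-m} = V_m^{*}V_n$. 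This single calculation shows simultaneously that $S$ is well-defined (take the right side equal to zero) and isometric on $\mathcal{K}_0$. Hence $S$ extends uniquely to an isometry on all of $\mathcal{K}$, still denoted $S$.

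Next I would check surjectivity: the range of $S$ contains every $V_{n+1} h$, so by re-indexing it contains the dense set $\mathcal{K}_0$; an isometry with dense range is surjective, hence $S$ is unitary. Setting $Q := V_0 \in \mathcal{L}(\mathcal{H}, \mathcal{K})$ and using the defining recursion $V_{n+1} h = S V_n h$ iteratively (and applying $S^{-1}$ for negative indices), one obtains $V_n = S^n Q$ for every $n \in \mathbb{Z}$. Substituting back gives
\[
Q^{*} S^{j-i} Q = V_0^{*} S^{j-i} V_0 = V_0^{*} V_{j-i} = T_{j-i} = C_{i,j},
\]
and the span condition follows immediately: $\mathcal{K} = \bigvee_{n} V_n \mathcal{H} = \bigvee_{n} S^n Q \mathcal{H}$.

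The only genuinely delicate point is the well-definedness of $S$ on $\mathcal{K}_0$, since the representation $\sum V_n h_n$ of a vector in $\mathcal{K}_0$ is generally not unique; but this is handled by the isometry identity above, because it forces the shifted sum to vanish whenever the original sum does. Everything else — isometry, density of the range, and the factorization $Q^{*} S^{j-i} Q = C_{i,j}$ — is a direct consequence of the Toeplitz structure applied to the Kolmogorov factors, and requires no further machinery.
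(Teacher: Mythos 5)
Your proof is correct and complete. Note that the paper itself does not prove this theorem: it is stated as a classical result with a pointer to Constantinescu's monograph, so there is no in-paper argument to compare against. Your route --- feeding the Toeplitz kernel into the Kolmogorov decomposition of Theorem~\ref{thm:kolmoDecomp} to get factors $V_n$ with $C_{i,j}=V_i^{*}V_j$, and then reading off the unitary from the identity $V_{m+1}^{*}V_{n+1}=T_{n-m}=V_m^{*}V_n$ --- is the standard derivation, and you handle the one genuinely delicate point correctly: the single norm identity $\bigl\|\sum_n V_{n+1}h_n\bigr\|^2=\bigl\|\sum_n V_n h_n\bigr\|^2$ gives both well-definedness of $S$ on the (possibly non-uniquely represented) vectors of $\mathcal{K}_0$ and the isometry property, after which dense range plus closedness of the range of an isometry yields unitarity, and $Q=V_0$, $V_n=S^nQ$ deliver both bullet points. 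This also makes explicit the remark in the surrounding text that the Naimark dilation is the specialization of the Kolmogorov decomposition to the stationary (Toeplitz) case, which is exactly the logical relationship your argument exploits.
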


The shift operator, which plays a fundamental role in functional and stochastic analysis, is defined by the successive powers of the unitary dilation operator \( U \). It is important to note that Theorem (\ref{thm:naimark}) applies only to \textit{stationary} processes. For nonstationary processes, a more general result is available: the Naimark dilation extends to the Kolmogorov decomposition, as described in Theorem \ref{thm:kolmoDecomp}.

This Kolmogorov decomposition extends the Kolmogorov Isomorphism Theorem (KIT) \cite{Kol} to the setting of operator-valued non-stationary processes, providing a unique minimal dilation representation for the correlation function \( C \) of a stochastic process. This fundamental result, originally established by Mandrekar and Salehi \cite{ManS} (\S 6), builds on the pioneering work of Wiener and Masani \cite{WieM} and reinforces the perspective proposed by Masani \cite{Mas2}, which asserts a deep connection between dilation theory and shift operators.

Dilation theory offers a powerful framework for analyzing signals by shifting the focus from the signal itself to its associated dilation operator, much as Fourier analysis shifts the emphasis from the time-domain representation to the spectral domain. Unlike Fourier methods, which are generally limited to stationary processes, dilation theory does not impose stationarity constraints. 

In the case of stationary processes, dilation theory naturally leads to the Naimark dilation (Theorem \ref{thm:naimark}), providing a unitary dilation framework. However, for non-stationary processes, the appropriate generalization is given by the Kolmogorov decomposition (Theorem \ref{thm:kolmoDecomp}), which constructs a minimal isometric dilation that captures the underlying structure of the non-stationary process.

\section{Operator orthogonal polynomials}

In Szeg\H{o} theory, orthogonal polynomials play a prominent role. Here, we will need to work with operator-valued orthogonal polynomials with respect to operator-valued measures on the unit circle. \\

For an operator polynomial $P_{n}$ of degree $n$, we defined the \textit{reversed polynomial} $P_{n}^{*}$  of $P_{n}$ by $$
P^{*}_{n}(z)=z^{n}P_{n}(1/\bar{z})^{\dag}.
$$
\noindent We have $$(P^{*}_{n})^{*}= P_{n},$$ and for any $\alpha\in \mathcal{L}(\mathcal{H})$, $$(\alpha P_{n})^{*} = P_{n}^{*}\alpha^{\dag},~~(P_{n}\alpha )^{*} = \alpha^{\dag}P_{n}^{*}.$$

The following lemma will be a very useful characterization of positive definite 2 $\times$ 2 operator matrices.

\begin{lem}
The following are equivalent:
\begin{enumerate}
  \item The operator matrix 
  	   $
       \begin{pmatrix}
       A & B\\
       B^{*} & C
       \end{pmatrix}
       $
       is positive definite. \\
  \item $A>0$ and $C-B^{*}A^{-1}B>0$.
  \item $C>0$ and $A-B^{*}C^{-1}B>0$;
\end{enumerate}
here $C-B^{*}A^{-1}B>0$ is called the {\it Schur complement} of $A$.
\end{lem}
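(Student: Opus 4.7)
The plan is to prove the equivalences via the block $LDU$ factorization that underlies the Schur complement calculus, paying particular attention to the fact that positive definiteness in this paper requires a bounded inverse in addition to strict positivity of the quadratic form. The central identity is
\[
\begin{pmatrix} A & B \\ B^{*} & C \end{pmatrix}
=
\begin{pmatrix} I & 0 \\ B^{*} A^{-1} & I \end{pmatrix}
\begin{pmatrix} A & 0 \\ 0 & C - B^{*} A^{-1} B \end{pmatrix}
\begin{pmatrix} I & A^{-1} B \\ 0 & I \end{pmatrix},
\]
which is valid whenever $A$ has a bounded inverse, together with its mirror image obtained by pivoting on $C$.

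For the implication $(2) \Rightarrow (1)$, the factorization expresses the block matrix as $L D L^{*}$, where $L$ is unit lower-triangular (hence bounded with a bounded triangular inverse) and $D$ is block-diagonal with both diagonal blocks positive definite by hypothesis. For every nonzero $v$ we have $\langle L D L^{*} v, v \rangle = \langle D L^{*} v, L^{*} v \rangle > 0$ because $L^{*}$ is invertible, and $L D L^{*}$ inherits a bounded inverse as the product of three boundedly invertible factors. The implication $(3) \Rightarrow (1)$ follows from the mirror factorization in exactly the same way.

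The harder direction is $(1) \Rightarrow (2)$. Testing the block matrix on vectors of the form $(x, 0)$ immediately gives $\langle A x, x \rangle > 0$ for all nonzero $x$. The main obstacle, specific to the infinite-dimensional setting, is to upgrade this strict positivity of the quadratic form to bounded invertibility of $A$. The key observation is that because $M = \begin{pmatrix} A & B \\ B^{*} & C \end{pmatrix}$ is self-adjoint with $0$ outside its spectrum, it is bounded below: $\langle M v, v \rangle \geq \delta \|v\|^{2}$ for some $\delta > 0$. Restricting to $v = (x, 0)$ yields $\langle A x, x \rangle \geq \delta \|x\|^{2}$, so $A$ is a self-adjoint operator bounded below and therefore has a bounded inverse. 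With $A^{-1}$ in hand, the factorization rearranges to
\[
\begin{pmatrix} A & 0 \\ 0 & C - B^{*} A^{-1} B \end{pmatrix} = L^{-1} M (L^{*})^{-1},
\]
which is a congruence of $M$ by a boundedly invertible operator and is therefore positive definite. Its lower-right block then inherits positive definiteness by testing on $(0, y)$ and repeating the bounded-below reasoning, which yields $(2)$. The equivalence $(1) \Leftrightarrow (3)$ is obtained by the symmetric argument, interchanging the roles of $A$ and $C$ via the unitary block flip $(x, y) \mapsto (y, x)$.
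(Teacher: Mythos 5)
Your proof is correct and rests on exactly the same Frobenius--Schur factorization that the paper invokes; the paper simply states that the lemma ``follows immediately'' from that identity, while you fill in the details. Your extra care in upgrading $\langle Ax,x\rangle>0$ to a lower bound $A\geq\delta I$ (using that the block operator is self-adjoint, positive, and boundedly invertible) is precisely the infinite-dimensional point the paper's one-line proof glosses over, and it is handled correctly.
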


\begin{proof}
This follows immediately from the \it{Frobenius-Schur factorization} (\cite{Zha}), 
\begin{equation}\label{schurcomplement}
\begin{pmatrix}
A & B\\
B^{*} & C
\end{pmatrix}
=\begin{pmatrix}
I & 0\\
B^{*}A^{-1} & I
\end{pmatrix}
\begin{pmatrix}
A & 0\\
0 & C-B^{*}A^{-1}B
\end{pmatrix}
\begin{pmatrix}
I & A^{-1}B\\
0 & I
\end{pmatrix}.
\end{equation}
\end{proof}

Given an operator-valued positive measure on $\mathbb{T}$, we define its \textit{moments} for $j=-n, \cdots, n$ by $$\mu_{j}=\mu_{j}(d\mu)=\frac{1}{2\pi}\int_{-\pi}^{\pi} e^{-ij\theta} \mu(d\theta) ~~ \mathrm{and}~~ \mu_{-j}=\mu^{*}_{j}.$$
The right and left {\it Toeplitz operator matrices} $T^{R}_{n}$ and $T^{L}_{n}$ associated to $\mu$ are
\[ \label{toeplitz_TR}
T^{R}_{n}=
\begin{pmatrix}
\mu_{0} & \mu_{-1} & \cdots & \mu_{-n+1}\\
\mu_{1} & \mu_{0} & \cdots & \mu_{-n+2}\\
\vdots & \vdots & \ddots & \vdots\\
\mu_{n-1} & \mu_{n-2} & \cdots & \mu_{0}
\end{pmatrix},
T^{L}_{n}=
\begin{pmatrix}
\mu_{0} & \mu_{1} & \cdots & \mu_{n-1}\\
\mu_{-1} & \mu_{0} & \cdots & \mu_{n-2}\\
\vdots & \vdots & \ddots & \vdots\\
\mu_{-n+1} & \mu_{-n+2} & \cdots & \mu_{0}
\end{pmatrix}.
\]

\noindent We also have 
\begin{align*}
T^{R}_{n+1}&=
\begin{pmatrix}
T^{R}_{n} & \nu_{n}\\
\nu^{*}_{n} & \mu_{0}
\end{pmatrix},
&
T^{L}_{n+1}&=
\begin{pmatrix}
T^{L}_{n} & \xi_{n}\\
\xi^{*}_{n} & \mu_{0}
\end{pmatrix},
\end{align*}

\noindent where  
\begin{align*}
\nu_{n}&=
\begin{pmatrix}
\mu_{-n}\\
\mu_{-n+1}\\
\vdots \\
\mu_{-1}
\end{pmatrix},
&
\xi_{n}&=
\begin{pmatrix}
\mu_{n}\\
\mu_{n-1}\\
\vdots \\
\mu_{1}
\end{pmatrix},
\end{align*}

\noindent and we define the {\it Schur complements} $\kappa_{n}^{R,L}$ of $\mu_{0}$ {\it in} $T^{R,L}_{n+1}$ as
\begin{eqnarray*}
\kappa_{n}^{R} &=& \mathrm{SC}(T^{R}_{n+1}) = \mu_{0} - \nu^{*}_{n} T^{-R}_{n}\nu_{n}, \\
\kappa_{n}^{L} &=& \mathrm{SC}(T^{L}_{n+1}) = \mu_{0} - \xi^{*}_{n} T^{-L}_{n}\xi_{n},
\end{eqnarray*}
\noindent where  $T^{-R,-L}_{n}= (T^{R,L}_{n})^{-1}$.

\begin{remark} 
If $P(z)=\sum_{k=0}^{n}p_k z^k$ and $Q(z)=\sum_{k=0}^{n}q_k z^k$, then $$\left\langle \left\langle P, Q \right\rangle \right\rangle_\mathrm{R}= \sum_{k,j=0}^{n}p_k^{\dag} T_{k-j}^{R}q_j,\quad \left\langle \left\langle P, Q \right\rangle \right\rangle_\mathrm{L}= \sum_{k,j=0}^{n}q_j T_{k-j}^{L}p_j^{\dag}.$$
This shows that the positivity of $\mu$ implies the positivity of $T^{R,L}_{n}$ and $\kappa_{n}^{R,L}$, hence they have bounded inverses. Indeed, we have 
\begin{eqnarray*}
0 &<& \int_{\mathbb{T}}\left\langle \left\langle d\mu(\sigma) \sum_{j=0}^{n}p_j z^j, \sum_{k=0}^{n}p_k z^k \right\rangle \right\rangle_\mathrm{R,L} \\
  &<& \sum_{j,k=0}^{n}\int_{\mathbb{T}}z^{j-k}\left\langle \left\langle d\mu(\sigma) p_j, p_k \right\rangle \right\rangle_\mathrm{R,L}=\sum_{j,k=0}^{n}\left\langle \left\langle T_{k-j} p_j, p_k \right\rangle \right\rangle_\mathrm{R,L},
\end{eqnarray*}
so $T^{R,L}_{n}$ is positive definite, and has a bounded inverse $T^{-R,-L}_{n}$.  
\end{remark} 

\section{Verblunsky recursion}

In this section, we show that the right and left orthogonal polynomials follow the classical Verblunsky recurrence relations.\\
\noindent In our approach, we will define the monic operator-valued polynomials in terms of the Schur complements. The notion of Schur complements (SC) is relatively simple but suprisingly strong.

\begin{prop}
Monic polynomials such that 

\begin{eqnarray*}
\Phi_{n}^{R}(z)&=&\mathrm{SC} 
\begin{pmatrix}
\mu_{0} & \mu_{-1} & \cdots & \mu_{-n+1} & \mu_{-n}\\
\mu_{1} & \mu_{0} & \cdots & \mu_{-n+2} & \mu_{-n+1}\\
\vdots & \vdots & \ddots & \vdots & \vdots\\
\mu_{n-1} & \mu_{n-2} & \cdots & \mu_{0}  & \mu_{-1}\\
I & zI & \cdots & z^{n-1}I & z^{n}I
\end{pmatrix} \\
&=& z^{n}I-[I~~zI~~ \cdots ~~ z^{n-1}I]~T^{-R}_{n}
\begin{pmatrix}
\mu_{-n}\\
\mu_{-n+1}\\
\vdots \\
\mu_{-1}
\end{pmatrix},
\end{eqnarray*}

\begin{eqnarray*}
\Phi_{n}^{L}(z)&=&\mathrm{SC} 
\begin{pmatrix}
\mu_{0} & \mu_{-1} & \cdots & \mu_{n-1} & I\\
\mu_{1} & \mu_{0} & \cdots & \mu_{n-2} & zI\\
\vdots & \vdots & \ddots & \vdots & \vdots\\
\mu_{n-1} & \mu_{n-2} & \cdots & \mu_{0}  & z^{n-1}I\\
\mu_{-n} & \mu_{n-1} & \cdots & \mu_{-1} & z^{n}I
\end{pmatrix} \\
&=& z^{n}I-[\mu_{-n} ~~ \mu_{-n+1} ~~ \cdots ~~ \mu_{-1}]~T^{-L}_{n}
\begin{pmatrix}
I\\
zI\\
\vdots \\
z^{n-1}I
\end{pmatrix}
\end{eqnarray*}
are orthogonal, i.e., for any $k,j \geq 0$,
$$
\left\langle \left\langle \Phi_{k}^{R},\Phi_{j}^{R}\right\rangle\right\rangle_{\mathrm{R}} = \delta_{kj}\kappa_{k}^{R},  \qquad
\left\langle \left\langle \Phi_{k}^{L},\Phi_{j}^{L}\right\rangle\right\rangle_{\mathrm{L}} =\delta_{kj}\kappa_{k}^{L}.
$$

\end{prop}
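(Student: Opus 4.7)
The plan is to prove the right-orthogonality $\langle\langle \Phi_k^R, \Phi_j^R\rangle\rangle_R = \delta_{kj}\kappa_k^R$ first; the left case will be entirely parallel. Writing $\Phi_k^R(z) = z^k I - \sum_{l=0}^{k-1} z^l c_l$ with $c = (c_0,\ldots,c_{k-1})^{T} := T_k^{-R}\nu_k$, the first step is to show that $\Phi_k^R$ is $R$-orthogonal to each monomial $z^m I$ for $0 \leq m \leq k-1$. The second step is to compute $\langle\langle \Phi_k^R, z^k I\rangle\rangle_R$ explicitly and identify it with $\kappa_k^R$.

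For the orthogonality step, I would apply the formula in the Remark to obtain
\[
\langle\langle \Phi_k^R, z^m I\rangle\rangle_R = \mu_{k-m} - \sum_{l=0}^{k-1} c_l^{\dag}\mu_{l-m},
\]
then take the adjoint and use $\mu_j^{\dag} = \mu_{-j}$, obtaining $\mu_{m-k} - \sum_l \mu_{m-l} c_l$. The sum $\sum_l \mu_{m-l} c_l$ is precisely the $m$-th entry of $T_k^R c$, which by the defining equation $T_k^R c = \nu_k$ equals $(\nu_k)_m = \mu_{m-k}$, so the expression vanishes. The diagonal computation proceeds the same way: $\langle\langle \Phi_k^R, z^k I\rangle\rangle_R = \mu_0 - \sum_l c_l^{\dag}\mu_{l-k}$, and reorganizing this sum as $\nu_k^{*} T_k^{-R}\nu_k$ (using self-adjointness of $T_k^R$) gives $\mu_0 - \nu_k^{*}T_k^{-R}\nu_k = \kappa_k^R$.

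These two facts combine to yield the full statement. For $k > j$, expand $\Phi_j^R = \sum_{l=0}^{j} z^l \alpha_l$; right-linearity of $\langle\langle \cdot,\cdot\rangle\rangle_R$ in the second argument gives $\langle\langle \Phi_k^R, \Phi_j^R\rangle\rangle_R = \sum_l \langle\langle \Phi_k^R, z^l I\rangle\rangle_R \alpha_l = 0$. For $k = j$ the same right-linearity collapses $\langle\langle \Phi_k^R, \Phi_k^R\rangle\rangle_R$ to $\langle\langle \Phi_k^R, z^k I\rangle\rangle_R = \kappa_k^R$, and the case $k<j$ follows from $\langle\langle f, g\rangle\rangle_R = \langle\langle g, f\rangle\rangle_R^{\dag}$. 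The left case is obtained by the same scheme with $(\xi_n, T_n^L, \kappa_n^L)$ in place of $(\nu_n, T_n^R, \kappa_n^R)$. The substantive content reduces to a single identity, namely $T_k^R (T_k^{-R}\nu_k) = \nu_k$: the Schur-complement formula for the coefficients of $\Phi_k^R$ is exactly the system of normal equations expressing orthogonality to $I, zI, \ldots, z^{k-1}I$. The only real obstacle is bookkeeping — tracking adjoints, the row/column conventions of $\nu_n$ and $\xi_n$, and the placement of operator coefficients relative to powers of $z$, while respecting the distinct $L$- and $R$-inner product sesquilinearities throughout.
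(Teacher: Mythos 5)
Your proposal is correct and follows essentially the same route as the paper: both reduce the claim to checking that $\Phi_k^R$ is orthogonal to the monomials $z^mI$, $0\le m\le k-1$, via the normal-equations identity $T_k^R\bigl(T_k^{-R}\nu_k\bigr)=\nu_k$, and then identify the diagonal term $\mu_0-\nu_k^*T_k^{-R}\nu_k$ with $\kappa_k^R$. The only (cosmetic) difference is that you work with $\langle\langle \Phi_k^R, z^mI\rangle\rangle_R$ and pass through an adjoint, whereas the paper computes $\langle\langle z^mI,\Phi_k^R\rangle\rangle_R$ directly from the integral; you also spell out the off-diagonal cases $k\ne j$, which the paper leaves implicit.
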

\begin{proof}
We prove just the first relation; the second is similar.\\
\noindent For any $0 \leq m \leq n-1$ and $z=e^{i\theta}$
\begin{eqnarray*}
\left\langle \left\langle z^{m}I,\Phi_{n}^{R}\right\rangle\right\rangle_{\mathrm{R}} &=& 
\int_{-\pi}^{\pi}e^{-im\theta} \mu(\theta) 
(e^{i n \theta} - 
[I~~e^{i\theta}~~\cdots~~e^{i(n-1)\theta}]T^{-R}_{n}\nu_{n})d\theta\\
&=& \mu_{m-n} - [\mu_{m}~~\cdots~~\mu_{m-n+1}]T^{-R}_{n}\nu_{n}\\
&=& \mu_{m-n} - \mu_{m-n} = 0.
\end{eqnarray*}
If $m=n$, then 
\begin{eqnarray*}
\left\langle \left\langle \Phi_{n}^{R},\Phi_{n}^{R}\right\rangle\right\rangle_{\mathrm{R}} &=& 
\mu_{0} - [\mu_{n}~~\cdots~~\mu_{1}]T^{-R}_{n}\nu_{n} = \kappa_{n}^{R}.
\end{eqnarray*}
\end{proof}

\begin{theorem}
The monic orthogonal polynomials $\Phi^{R}_{n}$ and $\Phi^{L}_{n}$ obey the following recursion relations:
\begin{eqnarray} \label{monic_recursion}
\Phi_{n+1}^{R} &=& z \Phi_{n}^{R} + \Phi_{n}^{L,*} \Phi_{n+1}^{R}(0), \\
\Phi_{n+1}^{L} &=& z \Phi_{n}^{L} + \Phi_{n+1}^{L}(0) \Phi_{n}^{R,*}.
\end{eqnarray} 
\end{theorem}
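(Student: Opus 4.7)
The plan is to prove each recursion by forming the difference between the two sides, showing it is a polynomial of degree at most $n$ with a specific orthogonality property, and then using the non-triviality of $\mu$ to conclude it vanishes. I focus on the first recursion; the second follows by the dual argument.

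Set $R(z) := \Phi_{n+1}^R(z) - z\Phi_n^R(z)$. Both terms are monic of degree $n+1$, so $R$ has degree at most $n$, and $R(0) = \Phi_{n+1}^R(0)$ since $z\Phi_n^R$ has no constant term. For $1 \le m \le n$, cancelling the factor of $z$ inside the integral gives $\langle\langle z^m I, z\Phi_n^R \rangle\rangle_R = \langle\langle z^{m-1} I, \Phi_n^R \rangle\rangle_R$, and combined with the orthogonality from the preceding proposition this yields $\langle\langle z^m I, R \rangle\rangle_R = 0$ for all such $m$.

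The main step is to show that $\Phi_n^{L,*}$ enjoys the same orthogonality in the right inner product. Reversing a monic polynomial of degree $n$ yields constant term $I^{\dagger} = I$, so $\Phi_n^{L,*}(0) = I$. On $\mathbb{T}$ the identity $\Phi_n^{L,*}(z) = z^n \Phi_n^L(z)^{\dagger}$ gives
\[
\langle\langle z^m I, \Phi_n^{L,*} \rangle\rangle_R = \int_{\mathbb{T}} z^{n-m} \mu(dz)\, \Phi_n^L(z)^{\dagger} = \langle\langle \Phi_n^L, z^{n-m} I \rangle\rangle_L,
\]
which vanishes for $1 \le m \le n$ by the left-orthogonality of $\Phi_n^L$, since $n-m$ then ranges over $\{0, \ldots, n-1\}$. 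Consequently $S(z) := R(z) - \Phi_n^{L,*}(z)\Phi_{n+1}^R(0)$ is a polynomial of degree at most $n$ with $S(0) = 0$ and $\langle\langle z^m I, S \rangle\rangle_R = 0$ for $1 \le m \le n$. Writing $S(z) = z\, T(z)$ with $\deg T \le n-1$ and cancelling the $z$ inside the inner product yields $\langle\langle z^j I, T \rangle\rangle_R = 0$ for $0 \le j \le n-1$. Expanding $T = \sum_j t_j z^j$ and moving each $t_j$ through the first slot gives $\langle\langle T, T \rangle\rangle_R = 0$, and by the non-triviality asserted in the preceding remark, $T \equiv 0$. Hence $R = \Phi_n^{L,*}\,\Phi_{n+1}^R(0)$, which is the first recursion.

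The second recursion follows by the dual argument with the roles of the right and left inner products swapped, the analogous conversion identity being $\langle\langle \Phi_n^{R,*}, z^m I \rangle\rangle_L = \langle\langle z^{n-m} I, \Phi_n^R \rangle\rangle_R$ via $P^{*}(z) = z^{\deg P} P(z)^{\dagger}$ on $\mathbb{T}$, together with left multiplication by $\Phi_{n+1}^L(0)$ on the correction term. The only delicate point in either case is precisely this conversion between left and right inner products through the reversal operation; once it is in hand, all remaining steps amount to routine bookkeeping and an application of the zero-divisor argument based on non-triviality of $\mu$.
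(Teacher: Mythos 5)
Your proof is correct and follows essentially the same route as the paper: both compare $\Phi_{n+1}^{R} - z\Phi_{n}^{R}$ with $\Phi_{n}^{L,*}$ through their common orthogonality to $zI,\dots,z^{n}I$ in $\langle\langle\cdot,\cdot\rangle\rangle_{\mathrm{R}}$ and then fix the operator factor by evaluating at $z=0$. You additionally make explicit the two steps the paper leaves implicit --- the conversion $\langle\langle z^{m}I,\Phi_{n}^{L,*}\rangle\rangle_{\mathrm{R}}=\langle\langle \Phi_{n}^{L},z^{n-m}I\rangle\rangle_{\mathrm{L}}$ and the norm-zero/non-triviality argument that upgrades ``both orthogonal to the same monomials'' to ``proportional'' --- so your version is a more complete rendering of the same argument.
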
     
\begin{proof}
For the first recursion, observe that for any $1 \leq i \leq n$, we have 
$$\left\langle \left\langle \Phi_{n+1}^{R} - z \Phi_{n}^{R}, z^{i}I\right\rangle\right\rangle_{\mathrm{R}}= \left\langle \left\langle \Phi_{n}^{L,*}, z^{i}I\right\rangle\right\rangle_{\mathrm{R}} = 0,$$
and so $\Phi_{n+1}^{R} - z \Phi_{n}^{R}$ and $\Phi_{n}^{L,*}$ are proportional. Setting $z=0$ gives the constant of proportionality as $\Phi_{n+1}^{R}(0)$. 
 Similarly for the other claim.
 \end{proof} 
 
\noindent Based on Miamee and Salehi \cite{MiaS1}, the operator $\kappa^{R,L}_{n}$  has a well-defined square root that is positive definite. Therefore, we define the normalized orthogonal polynomials as follows:
$$
\varphi_{n}^{R} =\Phi_{n}^{R} \kappa^{-R/2}_{n}~~~~\mathrm{and}~~~~\varphi_{n}^{L} =\kappa^{-L/2}_{n} \Phi_{n}^{L}.
$$
\noindent One easily verifies 
$$
\left\langle \left\langle \varphi_{n}^{R,L},\varphi_{n}^{R,L} \right\rangle\right\rangle_{\mathrm{R,L}}=\kappa^{-R,L/2}_{n}\left\langle \left\langle \Phi_{n}^{R,L},\Phi_{n}^{R,L} \right\rangle\right\rangle_{\mathrm{R,L}}\kappa^{-R,L/2}_{n}=I.
$$

\noindent Defining 
$$
\rho_{n}^{R}=\kappa_{n+1}^{R/2}\kappa_{n}^{-R/2} ~~\mathrm{and}~~ \rho_{n}^{L}=\kappa_{n}^{-L/2}\kappa_{n+1}^{L/2},
$$
\noindent one can easily show 
\begin{eqnarray}\label{szego-recurrence}
z \varphi_{n}^{R} - \varphi_{n+1}^{R} \rho^{R}_{n}&=&  \varphi_{n}^{L,*}(\alpha_{n}^{R})^{\dag},  \label{szego-recurrence_right} \\
z \varphi_{n}^{L} - \rho^{L}_{n} \varphi_{n+1}^{L}&=&  (\alpha_{n}^{L})^{\dag} \varphi_{n}^{R,*}, \label{szego-recurrence_left}
\end{eqnarray} 
\noindent where 
\begin{eqnarray}
\alpha_{n}^{R} &=& -(\kappa^{-R/2}_{n})^{\dag} \Phi_{n+1}^{R}(0)^{\dag} \kappa^{L/2}_{n},  \label{eqn:alph_R}\\
\alpha_{n}^{L} &=& -\kappa^{R/2}_{n} \Phi_{n+1}^{L}(0)^{\dag} (\kappa^{-L/2}_{n})^{\dag}.
\end{eqnarray}

\noindent One also has 
$$
\kappa^{-L}_{n}=(\rho_{n-1}^{L} \dots \rho_{0}^{L})^{2}~~\mathrm{and}~~\kappa^{-R}_{n}=(\rho_{0}^{R} \dots \rho_{n-1}^{R})^{2}.
$$

\begin{prop}\label{prop4.2}
\hfill
\begin{enumerate}
    \item The operators $\alpha_{n}^{R}$ and $\alpha_{n}^{L}$ are equal, denoted $\alpha_n$.
    \item $\rho^{L}_{n} = (I - \alpha_{n}^{\dag}\alpha_{n})^{1/2}$ and $\rho^{R}_{n} = (I - \alpha_{n}\alpha_{n}^{\dag})^{1/2}$.
\end{enumerate}
\end{prop}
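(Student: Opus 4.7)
The plan is to prove the two claims separately, guided throughout by the symmetry between the right and left constructions and by the normalized Szeg\H{o} recurrences~\eqref{szego-recurrence_right} and~\eqref{szego-recurrence_left}.

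For part~(1), I will reduce $\alpha_n^R = \alpha_n^L$ to the single operator identity $\kappa_n^L\,\Phi_{n+1}^R(0) = \Phi_{n+1}^L(0)\,\kappa_n^R$. Given this identity, taking adjoints and inserting $\kappa_n^{L/2}\kappa_n^{-L/2} = I$ transforms the defining formula for $\alpha_n^R$ directly into that for $\alpha_n^L$, using that $\kappa_n^{R,L}$ and their square roots are self-adjoint. To establish the identity, I evaluate $\langle\langle \Phi_n^{L,\ast},\,\Phi_{n+1}^R\rangle\rangle_R$ in two ways. On one hand it vanishes, because $\Phi_n^{L,\ast}$ has degree $\le n$ and $\Phi_{n+1}^R$ is orthogonal to such polynomials in the right inner product. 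On the other hand, substituting the monic recurrence and using $\langle\langle \Phi_n^{L,\ast},\Phi_n^{L,\ast}\rangle\rangle_R = \kappa_n^L$ (from the $\ast$-identity $\langle\langle f^\ast,g^\ast\rangle\rangle_R = \langle\langle g,f\rangle\rangle_L^{\dag}$) yields $\kappa_n^L\Phi_{n+1}^R(0) = -\langle\langle \Phi_n^{L,\ast},z\Phi_n^R\rangle\rangle_R$. A parallel calculation on the left gives $\Phi_{n+1}^L(0)\kappa_n^R = -\langle\langle \Phi_n^{R,\ast},z\Phi_n^L\rangle\rangle_L$.

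The crux of part~(1) is matching these two operator-valued integrals. Using $1/\bar z = z$ on the unit circle, one has $\Phi_n^{L,\ast}(z)^{\dag} = z^{-n}\Phi_n^L(z)$ and $\Phi_n^{R,\ast}(z)^{\dag} = z^{-n}\Phi_n^R(z)$, so both inner products collapse to the common integral $\int z^{1-n}\Phi_n^L(z)\,d\mu(z)\,\Phi_n^R(z)$. The ordering $\Phi_n^L\,d\mu\,\Phi_n^R$ is preserved because $d\mu$ sits between the two polynomials in each case, and the scalar factor $z^{1-n}$ commutes with all operators. This coincidence is the key algebraic observation and completes part~(1).

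For part~(2), write $\alpha_n$ for the common Verblunsky coefficient. Starting from $z\varphi_n^R = \varphi_{n+1}^R\rho_n^R + \varphi_n^{L,\ast}(\alpha_n)^{\dag}$, compute $\langle\langle z\varphi_n^R,z\varphi_n^R\rangle\rangle_R$. The left side equals $I$ by orthonormality of $\{\varphi_n^R\}$ and $|z|=1$. On the right, the cross term vanishes since $\varphi_{n+1}^R$ is orthogonal to the degree-$\le n$ polynomial $\varphi_n^{L,\ast}$ in the right inner product, and the diagonal terms contribute $(\rho_n^R)^{\dag}\rho_n^R$ and $\alpha_n\alpha_n^{\dag}$, after invoking $\langle\langle\varphi_n^{L,\ast},\varphi_n^{L,\ast}\rangle\rangle_R = I$ (deduced from $\langle\langle\varphi_n^L,\varphi_n^L\rangle\rangle_L = I$ via the $\ast$-identity). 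Equating yields $(\rho_n^R)^{\dag}\rho_n^R = I - \alpha_n\alpha_n^{\dag}$; taking $\rho_n^R$ as the positive square root (the canonical normalization absorbed into the choice of orthonormal basis) gives $\rho_n^R = (I-\alpha_n\alpha_n^{\dag})^{1/2}$. The mirror computation with the left recurrence and the left inner product produces $\rho_n^L(\rho_n^L)^{\dag} = I-\alpha_n^{\dag}\alpha_n$, whence $\rho_n^L = (I-\alpha_n^{\dag}\alpha_n)^{1/2}$.

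The main obstacle I anticipate is the matching step of part~(1): converting between the two inner products through the reversed polynomials is delicate, and the risk is a sign or an operator-ordering slip when simplifying $\Phi_n^{L,\ast}(z)^{\dag}$ and $\Phi_n^{R,\ast}(z)^{\dag}$ on $\mathbb{T}$. Once that identity is secured, the rest follows mechanically from the recurrences and the orthonormality of the $\varphi_n^{R,L}$.
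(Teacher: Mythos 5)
Your proof is correct and follows essentially the same route as the paper: part (1) rests on the same observation that the right pairing $\langle\langle \Phi_{n}^{L,*}, z\Phi_{n}^{R}\rangle\rangle_{\mathrm{R}}$ and the left pairing $\langle\langle \Phi_{n}^{R,*}, z\Phi_{n}^{L}\rangle\rangle_{\mathrm{L}}$ coincide on $\mathbb{T}$ (you carry this out at the monic level where the paper does it for the normalized polynomials), and part (2) is the same computation of the norm of the Szeg\H{o} recurrence with vanishing cross terms. You have merely supplied the details that the paper's ``we can easily prove'' leaves to the reader.
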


\begin{proof}
\noindent 
1) Observing that $\left\langle \left\langle \varphi_{n}^{R},\varphi_{n}^{L,*}\right\rangle\right\rangle_{\mathrm{R}} = \left\langle \left\langle \varphi_{n}^{R,*},\varphi_{n}^{L} \right\rangle\right\rangle_{\mathrm{L}} $, and using equations (\ref{szego-recurrence}), we can easily prove that $\alpha_{n}^{R}=\alpha_{n}^{L}$.

2) From (\ref{szego-recurrence_left}), we have 

\begin{eqnarray*}
||z\varphi_{n}^{L}||_{\mathrm{L}}- (\rho^{L}_{n})^{2} &=& \alpha_{n}^{\dag} \left\langle \left\langle \varphi_{n}^{R,*},\varphi_{n}^{R,*} \right\rangle\right\rangle_{\mathrm{L}} \alpha_{n}\\
&=& \alpha_{n}^{\dag}\alpha_{n}.
\end{eqnarray*}
Since $||z\varphi_{n}^{L}||_{\mathrm{L}} =1$, we have  $(\rho^{L}_{n})^{2} = 1 - \alpha_{n}^{\dag}\alpha_{n}.$\\
For $\rho^{R}_{n}$, the proof is similar.
\end{proof}

\noindent Using Proposition \ref{prop4.2}, we write,  
\begin{equation} \label{eq:recurrence_matrix}
      \begin{pmatrix}
        \varphi_{n}^{L}(z) \\
        \varphi_{n}^{R,*}(z) 
        \end{pmatrix}= A^{L}(\alpha_{n},z)\begin{pmatrix}
        \varphi_{n}^{L}(z) \\
        \varphi_{n}^{R,*}(z) 
      \end{pmatrix}
\end{equation}

where
\begin{equation} \label{eq:recurrence_inv_matrix}
A^{L}(\alpha,z) = \begin{pmatrix}
z(\rho^{L})^{-1} & -(\rho^{L})^{-1} \alpha^{\dag}\\
-z(\rho^{R})^{-1} \alpha & (\rho^{R})^{-1}
\end{pmatrix}.
\end{equation}

\section{Christoffel-Darboux Formulae}

This section introduces operator-valued right and left kernel polynomials and uses the recurrence formulae to derive the Christoffel-Darboux formulae. 

\begin{prop}\label{prop_kernelCD}
Define the right and left kernel polynomials of degree $n$ as
$$K_{n}^{R}(w,z) = \sum_{k=0}^{n}\varphi_{k}^{R}(z)\varphi_{k}^{R}(w)^{\dag} ,\quad K_{n}^{L}(w,z) = \sum_{k=0}^{n}\varphi_{k}^{L}(z)^{\dag}\varphi_{k}^{L}(w),$$
Then, we have
\begin{enumerate}
\item
$
K_{n}^{R}(w,z)=[I ~ zI ~\cdots ~ z^{n}I]~T_{n+1}^{-R}~\begin{bmatrix}
I \\
w^{-1}I \\
\vdots \\
w^{-n}I
\end{bmatrix}.
$
\item 
$
K_{n}^{L}(w,z)=[I ~ zI ~\cdots ~ z^{-n}I]~T_{n+1}^{-L}~\begin{bmatrix}
I \\
w^{1}I \\
\vdots \\
w^{n}I
\end{bmatrix}.
$
\item \begin{align} \label{cd_right}
K_{n}^{L}(w,z) &=&\frac{\varphi_{n}^{R,*}(w)^{\dag}\varphi_{n}^{R,*}(z) - \bar{w}z\varphi_{n}^{L}(w)^{\dag}\varphi_{n}^{L}(z)}{1-\bar{w}z}, \\ 
\label{cd_left}
K_{n}^{R}(w,z) &=&\frac{\varphi_{n}^{L,*}(w)\varphi_{n}^{L,*}(z)^{\dag} - \bar{w}z\varphi_{n}^{R}(w)\varphi_{n}^{R}(z)^{\dag}}{1-\bar{w}z}. 
\end{align}
\end{enumerate}
\end{prop}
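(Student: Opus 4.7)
The plan is to derive (1) and (2) by a change-of-basis argument in the space of operator polynomials of degree $\leq n$, and (3) by induction on $n$ using the Szeg\H{o} recurrences of Section 4 together with the Proposition 4.1(2) identities.

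For parts (1) and (2): since $\{\varphi_k^R\}_{k=0}^n$ is $\langle\langle\cdot,\cdot\rangle\rangle_R$-orthonormal and spans the same subspace as the monomial system $\{z^k I\}_{k=0}^n$ whose Gram matrix under $\langle\langle\cdot,\cdot\rangle\rangle_R$ is $T^R_{n+1}$ (by Remark 3.1), I would write $\varphi_k^R(z)=\sum_j c_{kj}z^j$ and collect the coefficients into a block matrix $C$ with $C_{jk}=c_{kj}$. The orthonormality relations then translate into $C^\dag T^R_{n+1}\,C = I$, hence $CC^\dag = T^{-R}_{n+1}$. Substituting into $K_n^R(w,z)=\sum_k\varphi_k^R(z)\varphi_k^R(w)^\dag$ collapses the sum to $[I\,zI\,\cdots\,z^n I]\,CC^\dag\,[I,\bar w I,\ldots,\bar w^n I]^T$, and invoking $\bar w=w^{-1}$ on $\mathbb T$ yields (1). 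Claim (2) follows identically with $\langle\langle\cdot,\cdot\rangle\rangle_L$ and $T^L_{n+1}$.

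For part (3): set $D_n(w,z):=\varphi_n^{R,*}(w)^\dag\varphi_n^{R,*}(z) - \bar w z\,\varphi_n^L(w)^\dag\varphi_n^L(z)$, so that (\ref{cd_right}) is the assertion $D_n(w,z)=(1-\bar w z)K_n^L(w,z)$. The base case $n=0$ is immediate from $\varphi_0^L = \varphi_0^{R,*}=\mu_0^{-1/2}$, giving $D_0=(1-\bar w z)\mu_0^{-1}=(1-\bar w z)K_0^L$. For the induction step, combine (\ref{szego-recurrence_left}) with the $*$-recurrence $\rho_n^R\varphi_{n+1}^{R,*}(z) = \varphi_n^{R,*}(z) - z\,\alpha_n\varphi_n^L(z)$ (obtained by reversing (\ref{szego-recurrence_right}) at degree $n+1$), which together constitute the transfer matrix $A^L(\alpha_n,z)$ of (\ref{eq:recurrence_inv_matrix}). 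Substituting these expressions into $D_{n+1}(w,z)$ produces four bilinear terms in $(\varphi_n^L,\varphi_n^{R,*})$ weighted by $(\rho_n^L)^{-2}$ and $(\rho_n^R)^{-2}$. Invoking Proposition 4.1(2), i.e.\ $\rho_n^{L,2}=I-\alpha_n^\dag\alpha_n$ and $\rho_n^{R,2}=I-\alpha_n\alpha_n^\dag$, and the derived identity $\alpha_n(\rho_n^L)^{-2}\alpha_n^\dag = (\rho_n^R)^{-2}-I$, each weighted coefficient factors as $(1-\bar w z)\cdot(\text{something})$. After telescoping against $D_n$, what remains is exactly $(1-\bar w z)\varphi_{n+1}^L(\cdot)^\dag\varphi_{n+1}^L(\cdot)$, matching $(1-\bar w z)(K_{n+1}^L-K_n^L)$ and closing the induction. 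Formula (\ref{cd_left}) is obtained by the mirror argument, using (\ref{szego-recurrence_right}) and the $*$-image of (\ref{szego-recurrence_left}).

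The main obstacle is the operator-algebra bookkeeping in (3): $\alpha_n$ and $\rho_n^{R,L}$ multiply from opposite sides in the $L$- and $R$-recurrences, and the $*$-operation swaps those positions, so scalar intuition must be replaced by careful tracking of left/right placements (in particular $(\rho_n^{R,L})^{-2}$ do not simplify to polynomials in $\alpha_n$). The cleanest way to organize the calculation is via (\ref{eq:recurrence_matrix}): $D_n$ arises as a sesquilinear pairing $u_w^\dag J u_z$ with $u_n:=(\varphi_n^L,\varphi_n^{R,*})^T$ and $J=\mathrm{diag}(-\bar w z\,I,\,I)$, and the key identity $A^L(\alpha_n,w)^\dag J\,A^L(\alpha_n,z) = J + (1-\bar w z)\cdot(\text{rank-one correction})$ packages all the cancellations into a single step.
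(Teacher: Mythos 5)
Your argument for part (3) is essentially the paper's own: both hinge on the $J$-unitarity identity $A^{L}(\alpha_{n},w)^{\dag}JA^{L}(\alpha_{n},z)=\mathrm{diag}(\bar{w}zI,-I)$ for the transfer matrix acting on $(\varphi_{n}^{L},\varphi_{n}^{R,*})^{T}$, followed by telescoping; your ``induction with a rank-one correction'' is just a re-packaging of that same computation, and your auxiliary identities ($\rho_{n}^{R}\varphi_{n+1}^{R,*}=\varphi_{n}^{R,*}-z\alpha_{n}\varphi_{n}^{L}$ and $\alpha_{n}(\rho_{n}^{L})^{-2}\alpha_{n}^{\dag}=(\rho_{n}^{R})^{-2}-I$) check out. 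For parts (1)--(2), however, you take a genuinely different route: the paper expands $T_{n+1}^{-R}$ blockwise via the Frobenius--Schur factorization and shows $R_{n}=R_{n-1}+\varphi_{n}^{R}(z)\varphi_{n}^{R}(w)^{\dag}$, telescoping down to $n=0$, whereas you identify $T_{n+1}^{R}$ as the Gram matrix of the monomials and deduce $CC^{\dag}=T_{n+1}^{-R}$ from the orthonormality relation $C^{\dag}T_{n+1}^{R}C=I$. Your version is shorter and makes the reproducing-kernel structure transparent, while the paper's version reuses the Schur-complement machinery already set up in Section 4 and produces the recursion $K_{n}^{R}=K_{n-1}^{R}+\varphi_{n}^{R}(z)\varphi_{n}^{R}(w)^{\dag}$ as a by-product. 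One point you should make explicit: in the operator setting $C^{\dag}T C=I$ alone only says $T^{1/2}C$ is an isometry, which need not be unitary on a Hilbert space; you must add that $C$ is block-triangular with invertible diagonal entries $\kappa_{k}^{-R/2}$ (the leading coefficients of the $\varphi_{k}^{R}$), hence invertible, before concluding $CC^{\dag}=T_{n+1}^{-R}$. With that one sentence added, the proposal is sound.
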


\begin{proof}
We write $$
K_{n}^{R}(w,z)=[Z ~ z^{n}I]~T_{n+1}^{-R}~\begin{bmatrix}
W^{*}\\
w^{-n}I
\end{bmatrix},
$$ with $Z=[I ~ zI ~\cdots ~ z^{n-1}I]$ and $W=[I ~ wI ~\cdots ~ w^{n-1}I]$.\\
Writing  
$$
T^{R}_{n+1}=
\begin{pmatrix}
T^{R}_{n} & \nu_{n}\\
\nu^{*}_{n} & \mu_{0}
\end{pmatrix}
$$ 
and using (\ref{schurcomplement}), we have 
$$
T^{-R}_{n+1}=
\begin{pmatrix}
A & \gamma\\
\gamma^{*} & \alpha
\end{pmatrix},
$$
where 
$$
A = T^{-R}_{n} + T^{-R}_{n}\nu_{n}\kappa_{n}^{-R}\nu_{n}^{*}T^{-R}_{n},~~~ \gamma = -T^{-R}_{n}\nu_{n}\kappa_{n}^{-R},~~~ \alpha = \kappa_{n}^{-R}.
$$
Using the equality 
$$
[I ~ zI ~\cdots ~ z^{n-1}I]T^{-R}_{n}\nu_{n}\kappa_{n}^{-R/2}
= z^{n}\kappa_{n}^{-R/2} - \varphi_{n}^{R}(z),
$$
we rewrite $K_{n}^{R}(w,z)$ as follows:
\begin{align*}
K_{n}^{R}(w,z)
&= ZAW^{*} + z^{n}\gamma^{*}X^{*}+Z\gamma w^{-n} + z^{n}\alpha x^{-n}\\
&= Z \Bigl(T_{n}^{-R} \;+\; T_{n}^{-R}\,\nu_{n}\,\kappa_{n}^{-R}\,\nu_{n}^{*}\,T_{n}^{-R}\Bigr) W^{*}
   \;-\; z^{n}\,\kappa_{n}^{-R}\,\nu_{n}^{*}\,T_{n}^{-R}\\
   \;& \quad -\; Z\,T_{n}^{-R}\,\nu_{n}\,\kappa_{n}^{-R}\,w^{-n}   
   \;+\; z^{n}\,\kappa_{n}^{-R} w^{-n}\, \\[6pt]
&= Z\,T_{n}^{-R}\,W^{*}
   \;+\;\bigl(z^{n}\,\kappa_{n}^{-R/2}\;-\;\varphi_{n}^{R}(z))\,
    \bigl(w^{-n}\,\kappa_{n}^{-R/2}\;-\;\varphi_{n}^R(w)^{\dag}) \\
&\quad -\;w^{-n}\,\bigl(z^{n}\,\kappa_{n}^{-R/2}\;-\;\varphi_{n}^{R}(z)\bigr)\,\kappa_{n}^{-R/2}
   \;-\;z^{n}\,\kappa_{n}^{-R/2}\,\bigl(w^{-n}\,\kappa_{n}^{-R/2}\;-\;\varphi_{n}^{R}(w)^{\dag}) \\
&\quad +\;z^{n}\,w^{-n}\,\kappa_{n}^{-R} \\[6pt]
&= K_{n-1}^{R}(w,z) + \varphi_{n}^{R}(z) \varphi_{n}^{R}(w)^{\dag}.
\end{align*}
Summing, the telescoping sum gives 1. as required.\\

\noindent For 3.
$$
F_{n}^{L}(z)=\begin{pmatrix}
\varphi_{n}^{L}(z) \\
\varphi_{n}^{R,*}(z) 
\end{pmatrix}, ~~ 
J=\begin{pmatrix}
I & 0 \\
0 & -I
\end{pmatrix}, ~~
\tilde{J}=\begin{pmatrix}
\bar{w}zI & 0 \\
0 & -I
\end{pmatrix}.
$$
Then 
$$
F_{n+1}^{L}(z)= A^{L}(\alpha_{n},z)F_{n}^{L}(z)
$$ 
and 
$$
A^{L}(\alpha_{n},w)^{\dag} J A^{L}(\alpha_{n},z)=\begin{pmatrix}
\bar{w}zI & 0 \\
0 & -I
\end{pmatrix} = \tilde{J}.
$$ 
Thus 
$$
F_{n+1}^{L}(w)^{\dag} J F_{n+1}^{L}(z) = F_{n}^{L}(w)^{\dag} A^{L}(\alpha_{n},w)^{\dag} J A^{L}(\alpha_{n},z) F_{n}^{L}(z) = F_{n}^{L}(w)^{\dag} \tilde{J} F_{n}^{L}(z)
$$ 
and hence
$$
\varphi_{n+1}^{L}(w)^{\dag}\varphi_{n+1}^{L}(z) - \varphi_{n+1}^{R,*}(w)^{\dag}\varphi_{n+1}^{R,*}(z)=\bar{w}z\varphi_{n}^{L}(w)^{\dag}\varphi_{n}^{L}(z) - \varphi_{n}^{R,*}(w)^{\dag}\varphi_{n}^{R,*}(z),
$$ 
which gives the second part of (a). Now, if we let 
$$
Q_{n}^{L}(z,w)=\varphi_{n+1}^{R,*}(w)^{\dag}\varphi_{n+1}^{R,*}(z) - \varphi_{n+1}^{L}(z)^{\dag}\varphi_{n+1}^{L}(z),
$$ 
we see 
\begin{eqnarray*}
Q_{n+1}^{L}(z,w) - Q_{n}^{L}(z,w) &=& \varphi_{n}^{R,*}(w)^{\dag}\varphi_{n}^{R,*}(z) - \bar{w}z\varphi_{n}^{L}(w)^{\dag}\varphi_{n}^{L}(z) \\
& &- \varphi_{n}^{R,*}(w)^{\dag}\varphi_{n}^{R,*}(z) + \varphi_{n}^{L}(w)^{\dag}\varphi_{n}^{L}(z)\\
&=&(1-\bar{w}z)\varphi_{n}^{L}(w)^{\dag}\varphi_{n}^{L}(z).
\end{eqnarray*}
Summing over $n$ the proof for (\ref{cd_right}) follows since $Q_{-1}^{L}(z,w)=0$.\\
The proof for (\ref{cd_left}) is similar.
\end{proof}

\section{Bernstein-Szeg\H{o} Approximation}
In this section, we will reconstruct the operator-valued measure from the operator orthogonal polynomials via the Bernstein–Szeg\H{o} approximation.\\

\noindent Given a nontrivial operator-valued measure $d\mu$, with Verblunsky coefficients $\{\alpha_{n}\}_{n=0}^{\infty}$, we will define measures $d\mu^{(n)}$ with
\[
\alpha_{j}(d\mu^{(n)})= \left\{ 
	\begin{array}{ll}
        \alpha_{j},& \mbox{ $j\leq n$}  \\
        \mathbf{0},  & \mbox{ $j\geq n+1$}\\
    \end{array}
    \right.
\]
\noindent then $d\mu^{(n)} \rightarrow d\mu$ weakly (as in Simon \cite{Sim2} in the scalar case). As \cite{Sim2}, the proof below of the Szeg\H{o} theorem hinges on this.\\

\noindent As a preliminary, we need the following theorem:

\begin{theorem} \label{bernstein_zeros} We have
\begin{enumerate}
\item For $z \in \mathbb{T}$, all of $\varphi_{n}^{R,*}(z)$, $\varphi_{n}^{L,*}(z)$, $\varphi_{n}^{R}(z)$, $\varphi_{n}^{L}(z)$ are invertible.
\item For any $z \in \mathbb{T}$, $\varphi_{n}^{R}(z)\varphi_{n}^{R}(z)^{\dag} = \varphi_{n}^{L}(z)^{\dag}\varphi_{n}^{L}(z).$
\end{enumerate}

\end{theorem}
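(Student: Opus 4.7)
The plan is to use the Christoffel--Darboux identities of Proposition~\ref{prop_kernelCD}, which become strict operator inequalities when $|z|\neq 1$ and collapse to exact identities when $z\in\mathbb{T}$. Part~(2) is an immediate consequence of this collapse: taking $w=z\in\mathbb{T}$ in~(\ref{cd_left}), the denominator $1-\bar wz$ vanishes, so finiteness of $K_n^R(z,z)$ forces the numerator to vanish as well, yielding $\varphi_n^{L,*}(z)\varphi_n^{L,*}(z)^{\dag}=\varphi_n^R(z)\varphi_n^R(z)^{\dag}$ on $\mathbb{T}$. The definition $P^{*}(z)=z^nP(1/\bar z)^{\dag}$ applied with $1/\bar z=z$ on $\mathbb{T}$ gives $\varphi_n^{L,*}(z)=z^n\varphi_n^L(z)^{\dag}$, and the left-hand side rewrites as $|z|^{2n}\varphi_n^L(z)^{\dag}\varphi_n^L(z)=\varphi_n^L(z)^{\dag}\varphi_n^L(z)$, which is~(2).

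For~(1), I first handle $|z|\neq 1$. Setting $w=z$ in~(\ref{cd_right}) and rearranging gives
\[
\varphi_n^{R,*}(z)^{\dag}\varphi_n^{R,*}(z)=|z|^2\,\varphi_n^L(z)^{\dag}\varphi_n^L(z)+(1-|z|^2)K_n^L(z,z),
\]
together with the companion identity for $\varphi_n^{L,*}(z)\varphi_n^{L,*}(z)^{\dag}$ obtained from~(\ref{cd_left}). Because $K_n^{L,R}(z,z)\ge \varphi_0^{L,R}(z)\varphi_0^{L,R}(z)^{\dag}=\mu_0^{-1}>0$, for $|z|<1$ both right-hand sides are bounded below by $(1-|z|^2)\mu_0^{-1}>0$, so $\varphi_n^{R,*}(z)$ is injective and $\varphi_n^{L,*}(z)$ is surjective on the open disk, and the same rearrangement for $|z|>1$ yields injectivity of $\varphi_n^L(z)$ and surjectivity of $\varphi_n^R(z)$. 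Combined with $\varphi_n^{L,*}(z)=z^n\varphi_n^L(z)^{\dag}$ on $\mathbb{T}$ and part~(2), the four pieces of injectivity/surjectivity information can be cross-matched among $\varphi_n^R,\varphi_n^L,\varphi_n^{R,*},\varphi_n^{L,*}$ at the boundary.

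The main obstacle is passing from the strict inequalities on $|z|<1$ (respectively $|z|>1$) to outright invertibility at $|z|=1$, since the bound $(1-|z|^2)\mu_0^{-1}$ degenerates on the circle. The natural remedy mirrors the scalar argument of Szeg\H{o}: if $\Phi_n^R(z_0)$ failed to be invertible at some $z_0\in\mathbb{T}$, one factors $\Phi_n^R(z)h=(z-z_0)q(z)$ for a non-zero $h\in\mathcal{H}$ and uses the orthogonality of $\Phi_n^R$ to all operator polynomials of degree less than~$n$; applied to the rank-one polynomial $q(z)\otimes h^{*}$, this orthogonality becomes a Cauchy--Schwarz equality for the operator inner product, which can only hold if $\mu$ concentrates at $z_0$, contradicting the non-triviality of $\mu$ recorded in the Remark following Proposition~3.1. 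This operator-valued Cauchy--Schwarz step is the delicate crux; once it places all ``zeros'' of $\Phi_n^R$ strictly inside $\{|z|<1\}$, the corresponding property for $\Phi_n^{R,*}(z)=z^n\Phi_n^R(1/\bar z)^{\dag}$ (zeros strictly outside), together with the normalization $\varphi=\Phi\kappa^{-1/2}$ and the $L/R$ symmetry, yields invertibility of all four normalized polynomials on $\mathbb{T}$.
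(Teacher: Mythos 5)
Your part (2) coincides with the paper's argument: set $w=z\in\mathbb{T}$ in the Christoffel--Darboux identity, observe that the vanishing denominator forces the numerator to vanish, and convert $\varphi_{n}^{L,*}$ (resp.\ $\varphi_{n}^{R,*}$) back to $\varphi_{n}^{L}$ (resp.\ $\varphi_{n}^{R}$) via $P^{*}(z)=z^{n}P(z)^{\dag}$ on the circle. That half is fine.

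Part (1) has a genuine gap. The theorem is asserted only for $z\in\mathbb{T}$, which is exactly where your lower bound $(1-|z|^{2})\mu_{0}^{-1}$ degenerates; so the strict inequalities on $|z|\neq 1$ prove nothing about the set in question, and the entire burden falls on the boundary step, which you leave as a sketch. That sketch is problematic on two counts. First, the factorization $\Phi_{n}^{R}(z)h=(z-z_{0})q(z)$ presupposes a genuine kernel vector $h\neq 0$ with $\Phi_{n}^{R}(z_{0})h=0$; in an infinite-dimensional $\mathcal{H}$ an operator can fail to be invertible while remaining injective (non-closed or non-dense range), so ruling out kernel vectors does not yield invertibility --- you would need a uniform bound $\Phi_{n}^{R}(z_{0})^{\dag}\Phi_{n}^{R}(z_{0})\geq cI$ together with the corresponding bound for the adjoint. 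Second, the ``Cauchy--Schwarz equality forces $\mu$ to concentrate at $z_{0}$'' step is precisely the delicate point, and you do not carry it out; the inner products $\langle\langle\cdot,\cdot\rangle\rangle_{R}$ are operator-valued, so the equality case of Cauchy--Schwarz is not available off the shelf. The paper takes a different and shorter route that works at $|w|=1$ without degeneration: assuming $\varphi_{n}^{R}(w)^{\dag}c=0$ for some $0<|w|\leq 1$, it deduces $\varphi_{n}^{L,*}(w)^{\dag}c=0$ from the Christoffel--Darboux identity, hence $K_{n}^{R}(w,\cdot)c\equiv 0$, and then reads this against the representation $K_{n}^{R}(w,z)=[I~zI~\cdots~z^{n}I]\,T_{n+1}^{-R}\,(c,w^{-1}c,\dots,w^{-n}c)^{T}$ of Proposition~\ref{prop_kernelCD} to contradict the invertibility of $T_{n+1}^{R}$; no zero-counting or measure-concentration argument is needed. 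If you wish to keep your route, you must both upgrade ``no kernel vector'' to a two-sided coercivity statement and supply the operator-valued equality-case argument; as it stands the proposal does not prove statement (1).
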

\begin{proof}
For 1., assume that $0< |w| \leq 1$. If there exists $c \neq 0$ such that $\varphi_{n}^{R}(w)^{\dag}c=0$, we also have that $\varphi_{n}^{L,*}(w)^{\dag}c=0$.\\
\noindent Then, propositions \ref{prop_kernelCD} lead to
$$
[I ~ zI ~\cdots ~ z^{n}I]~T_{n+1}^{-R}~\begin{bmatrix}
c \\
w^{-1}c \\
\vdots \\
w^{-n}c
\end{bmatrix}=0,~~ \mathrm{for}~~z \in \mathbb{C},
$$
which contradicts the invertibility of $T_{n+1}^{-R}$, unless $c=0$. This proves 1. for $\varphi_{n}^{L,*}(z)$ and $\varphi_{n}^{R}(z)$. If $z = e^{i\theta}$, $\varphi_{n}^{R,*}(z)= e^{-in\theta} \varphi_{n}^{R}(z)^{\dag}$ and $\varphi_{n}^{L}(z)= e^{in\theta} \varphi_{n}^{L,*}(z)^{\dag}$ are also invertible. \\
\indent For 2.,  put $z=w$ in equation (\ref{cd_right}).  One has $\varphi_{n}^{R,*}(z)^{\dag}\varphi_{n}^{R,*}(z) = \varphi_{n}^{L}(z)^{\dag}\varphi_{n}^{L}(z)$, which can be rewritten as 
$$
\varphi_{n}^{R}(z)\varphi_{n}^{R}(z)^{\dag} = \varphi_{n}^{L}(z)^{\dag}\varphi_{n}^{L}(z),
$$
as required.
\end{proof}

As in the scalar case, we can use the invertibility in Theorem (\ref{bernstein_zeros}) to write 
\begin{equation} \label{eq:BS_measure}
d\mu^{(n)}(\theta) = [\varphi_{n}^{R}(e^{i\theta})\varphi_{n}^{R}(e^{i\theta})^{\dag}]^{-1}\frac{d\theta}{2\pi} = \mu'^{(n)}(\theta)\frac{d\theta}{2\pi}.
\end{equation} 
Also, directly from the definition of the right orthogonal polynomials, we have the right Bernstein-Szeg\H{o} approximation to $\mu$:
\[
d\mu^{(n)}(\theta) = [\varphi_{n}^{R,*}(e^{i\theta})^{\dag}\varphi_{n}^{R,*}(e^{i\theta})]^{-1}\frac{d\theta}{2\pi};
\]
\noindent the corresponding left Berstein-Szeg\H{o} approximation of ${\mu}_n     $ to $\mu$ is 
\[
d\mu^{(n)}(\theta) = [\varphi_{n}^{L,*}(e^{i\theta})\varphi_{n}^{L,*}(e^{i\theta})^{\dag}]^{-1}\frac{d\theta}{2\pi}.
\]

\begin{theorem}
The operator-valued measure $d\mu^{(n)}$ is normalized and its right operator orthogonal polynomials for $j=0, \cdots , n$ are $\{\varphi_{j}^{R}\}_{j=0}^{n}$, and for $j>n$, 
\begin{equation}\label{eq1_bernstein}
    \varphi_{j}^{R}(z; d\mu^{(n)})=z^{j-n}\varphi_{n}^{R}(z; d\mu).
\end{equation}

\noindent The Verblunsky coefficients for $d\mu^{(n)}$ are 
\begin{eqnarray}\label{eq:BD_approx_bis}
\alpha_{j}(d\mu^{(n)})= \left\{ 
	\begin{array}{ll}
        \alpha_{j}(d\mu),& \mbox{ $j\leq n$} , \\
        \mathbf{0},  & \mbox{ $j\geq n+1$}.\\
    \end{array}
    \right.    
\end{eqnarray} 
\end{theorem}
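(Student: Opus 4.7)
The plan rests on one identity. For $z\in\mathbb{T}$ the reversal relation $\varphi_n^{R,*}(z) = z^n\varphi_n^R(z)^{\dag}$ gives $\varphi_n^R(z)\varphi_n^R(z)^{\dag} = \varphi_n^{R,*}(z)^{\dag}\varphi_n^{R,*}(z)$, so that
\[
\mu'^{(n)}(z) \;=\; \varphi_n^{R,*}(z)^{-1}\,\varphi_n^{R,*}(z)^{-\dag}, \qquad z\in\mathbb{T}.
\]
By Theorem~\ref{bernstein_zeros} together with the standard OPUC fact that the zeros of $\Phi_n^R$ lie inside $\mathbb{D}$ (so those of $\varphi_n^{R,*}$ lie strictly outside $\overline{\mathbb{D}}$), $\varphi_n^{R,*}(z)^{-1}$ admits a Taylor expansion $\sum_{p\geq 0}H_p z^p$ converging in a neighborhood of $\overline{\mathbb{D}}$, with $H_0 = \kappa_n^{R/2}$. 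Every orthogonality integral below then reduces to reading off the zeroth harmonic of a simple Laurent series.

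First, to match at level $n$: from $\varphi_n^R(z)^{\dag}\mu'^{(n)}(z) = \varphi_n^R(z)^{-1} = z^{-n}\varphi_n^{R,*}(z)^{-\dag}$ on $\mathbb{T}$, the inner product $\int \varphi_n^R(z)^{\dag}\mu'^{(n)}(z)\,z^m I\,\frac{d\theta}{2\pi}$ collapses to $\int z^{m-n}\varphi_n^{R,*}(z)^{-\dag}\,\frac{d\theta}{2\pi}$, which equals $H_{m-n}^{\dag}$ for $m\geq n$ and vanishes for $m<n$. The unit-norm relation $\int \varphi_n^R(z)^{\dag}\mu'^{(n)}(z)\varphi_n^R(z)\,\frac{d\theta}{2\pi} = I$ is immediate since $\varphi_n^R(z)^{\dag}\mu'^{(n)}(z)\varphi_n^R(z)\equiv I$ on $\mathbb{T}$. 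By uniqueness of monic orthogonal polynomials, $\Phi_n^R(d\mu^{(n)}) = \Phi_n^R(d\mu)$ and $\kappa_n^R(d\mu^{(n)}) = \kappa_n^R(d\mu)$.

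For higher levels $j=n+k$ with $k\geq 1$: testing $z^k\Phi_n^R(d\mu)$ against $z^m I$ for $m=0,\ldots,n+k-1$ produces integrands $z^{n+k-m}\varphi_n^{R,*}(z)^{-1}\kappa_n^{R/2}$, and since $n+k-m\geq 1$ while $\varphi_n^{R,*}(z)^{-1}$ has only non-negative Taylor powers, no term hits the zeroth harmonic, so the integrals vanish. Uniqueness then gives $\Phi_{n+k}^R(d\mu^{(n)}) = z^k\Phi_n^R(d\mu)$, which is (\ref{eq1_bernstein}). A direct computation yields $\kappa_{n+k}^R(d\mu^{(n)}) = \kappa_n^R(d\mu)$ for every $k\geq 0$, so $\rho_{n+k}^R = I$, and the identity $\rho_n^R = (I-\alpha_n\alpha_n^{\dag})^{1/2}$ forces $\alpha_{n+k}(d\mu^{(n)}) = 0$ for all $k\geq 0$.

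For the remaining low-degree matching ($j<n$), I would establish the moment identities $\mu_k(d\mu^{(n)}) = \mu_k(d\mu)$ for $|k|\leq n$. Once the Toeplitz block $T_{n+1}^R$ coincides, the Schur-complement definition of $\Phi_j^R$ from Section~4 gives identical monic orthogonal polynomials $\{\Phi_j^R\}_{j\leq n}$, and (\ref{eqn:alph_R}) yields matching Verblunsky parameters $\alpha_j$ for $j<n$; the normalization $\mu_0(d\mu^{(n)}) = I$ is the $k=0$ case. The main obstacle is exactly this moment matching: the contour cancellation in the first two steps exploits the strict positivity of the exponent $n+k-m$, and this mechanism degenerates once $j<n$. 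Closing the gap requires a separate argument combining the right and left orthogonality relations for $\varphi_n^R$ and $\varphi_n^L$ against both measures, or equivalently an appeal to the operator analog of Verblunsky's bijection between probability measures and parameter sequences.
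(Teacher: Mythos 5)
Your argument is essentially the paper's: the paper also establishes orthogonality of $\varphi_n^R$ (and of $z^{j-n}\varphi_n^R$ for $j>n$) with respect to $d\mu^{(n)}$ by the same contour/Fourier-coefficient computation, resting on the analyticity of $\varphi_n^{R,*}(\cdot)^{-1}$ in $\overline{\mathbb{D}}$ from Theorem~\ref{bernstein_zeros}, and then reads off the vanishing of the higher Verblunsky coefficients (the paper via $\Phi_{n+1}^R(0)=0$ and (\ref{eqn:alph_R}), you via $\kappa^R_{n+k}$ being constant and $\rho^R_{n+k}=I$ --- equivalent routes). The one point you flag as open, the matching of the polynomials and Verblunsky coefficients at levels $j<n$, is real: the paper's ``by direct calculation'' covers only the orthogonality of $\varphi_n^R$ against lower powers, not the mutual orthonormality of $\{\varphi_j^R\}_{j<n}$ under the new measure. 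The intended closure is exactly the mechanism you name at the end: Theorem~\ref{geronimus_thm} (whose proof uses only the inverse Szeg\H{o} recursion and is independent of this theorem) propagates equality of the $n$-th orthonormal polynomials downward to all $j<n$ and to the moments $\mu_j$, $|j|\le n$. To invoke it you need the left polynomial $\varphi_n^L$ to match as well, which follows from part 2 of Theorem~\ref{bernstein_zeros} ($\varphi_n^R\varphi_n^{R\dag}=\varphi_n^{L\dag}\varphi_n^L$ on $\mathbb{T}$, so $d\mu^{(n)}$ admits the same Bernstein--Szeg\H{o} representation in terms of $\varphi_n^L$) together with the left-handed version of your contour argument. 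With that citation added, your proof is complete and, if anything, more explicit than the paper's.
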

\begin{proof}
Let $\langle\langle \cdot, \cdot\rangle \rangle_{R}$ be the inner product associated with $\mu^{(n)}$. By direct calculation, 
\begin{equation}
\langle\langle \varphi_{n}^{R}, \varphi_{n}^{R} \rangle \rangle_{R} = \mathbf{1},
\end{equation}
and for $j = 0,1, \cdots, n-1$,
\begin{equation}
\langle\langle \varphi_{n}^{R}, \varphi_{j}^{R} \rangle \rangle_{R} = 0.
\end{equation}
So the family $\{\varphi_{n}^{R}\}_{j=0}^{n}$ is an orthonormal basis with respect to the inner product $\langle\langle \cdot, \cdot\rangle \rangle_{R}$. Also,
\begin{eqnarray*}
    \langle\langle e^{ij\theta}, \varphi_{n}^{R} \rangle \rangle_{R} &=&\frac{1}{2\pi}\int_{0}^{2\pi}e^{ij\theta}(\varphi_{n}^{R}(e^{i\theta})^{\dag})^{-1}d\theta\\
    &=& \frac{1}{2\pi}\oint e^{i(n-j-1)\theta}(\varphi_{n}^{R, *}(e^{i\theta}))^{-1}d\theta = 0,
\end{eqnarray*}
\noindent since $n-k-1 \geq 0$ and $\varphi_{n}^{R, *}(e^{i\theta})^{-1}$ is analytic in a neighborhood of $\bar{\mathbb{D}}$ by Theorem (\ref{bernstein_zeros}). This proves $\varphi_{n}^{R}$ is an OPUC for $d\mu_{n}$ and (\ref{eq1_bernstein}) holds. \\
By (\ref{eqn:alph_R}), if $\varphi^{R}_{k+1}(0)=0$, then $\alpha_k=0$, then by (\ref{eq1_bernstein}) $\varphi_{n+j}^{R}(0; d\mu^{(n)})=0$, which implies (\ref{eq:BD_approx_bis}).
\end{proof}
\noindent Next, measures with the properties above are essentially unique:
\begin{theorem} \label{geronimus_thm}
    Let $d\mu$ and $d\nu$ be two nontrivial operator-valued measures on $\mathbb{T}$ such that for $n$, 
    \begin{equation}\label{eq:Phi_N}
           \varphi_{n}^{R,L}(z;d\mu) = \varphi_{n}^{R,L}(z;d\nu).
    \end{equation} 
Then 
    \begin{eqnarray} \label{eq:Geronimus_op}
    	\begin{array}{ll}
            (i)~~\varphi^{R,L}_{j}(z;d\mu) = \varphi_{j}^{R,L}(z;d\nu)& \mbox{   $j=0,1,\cdots,n-1$} , \\
            (ii)~~\alpha_{j}(d\mu) = \alpha_{j}(d\nu)  & \mbox{   $j=0,1,\cdots,n-1$},\\
            (iii)~~\mu_{j}(d\mu) = \mu_{j}(d\nu)  & \mbox{  $j=0,1,\cdots,n$}.\\
            \end{array}  
    \end{eqnarray} 
\end{theorem}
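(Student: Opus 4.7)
The strategy is to leverage the Bernstein-Szeg\H{o} approximation built in the preceding section. The key structural observation is that the measure $d\mu^{(n)}$ defined by (\ref{eq:BS_measure}) depends on $d\mu$ only through $\varphi_n^R(\cdot;d\mu)$, and by the alternative formulas recorded at the end of Section 6 (which are equivalent by virtue of Theorem~\ref{bernstein_zeros}(2)), equivalently only through $\varphi_n^L(\cdot;d\mu)$. Hence under the hypothesis $\varphi_n^{R,L}(z;d\mu) = \varphi_n^{R,L}(z;d\nu)$, the two Bernstein-Szeg\H{o} approximations coincide, giving a common auxiliary measure $d\lambda := d\mu^{(n)} = d\nu^{(n)}$. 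Applying the preceding Bernstein-Szeg\H{o} theorem once to $d\mu$ and once to $d\nu$ then yields $\varphi_j^{R,L}(z;d\lambda) = \varphi_j^{R,L}(z;d\mu)$ and $\varphi_j^{R,L}(z;d\lambda) = \varphi_j^{R,L}(z;d\nu)$ for $j = 0, \ldots, n$; chaining these equalities gives (i).

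For (ii), formula (\ref{eqn:alph_R}) presents $\alpha_j$ in terms of $\Phi_{j+1}^R(0)$, $\kappa_j^R$, and $\kappa_j^L$, each of which is read directly off the constant and leading operator coefficients of $\varphi_{j+1}^R, \varphi_j^R, \varphi_j^L$ (using $\Phi_k^{R,L} = \varphi_k^{R,L}\kappa_k^{R/2}$ and similarly on the left). Since (i) already matches these polynomials for the two measures, the Verblunsky coefficients agree for $j = 0, \ldots, n-1$. For (iii), I argue by induction on $j$. The base case $\mu_0 = \kappa_0^R$ is extracted as the squared inverse leading coefficient of $\varphi_0^R$, so agrees. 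For the step, write $\Phi_j^R(z) = z^j I + \sum_{k=0}^{j-1} \phi_k z^k$ and expand the orthogonality relation $\langle\langle I, \Phi_j^R\rangle\rangle_R = 0$ to get
\[
\mu_{-j} = -\sum_{k=0}^{j-1}\mu_{-k}\phi_k.
\]
Under the inductive hypothesis that $\mu_0, \ldots, \mu_{j-1}$ agree across the two measures, and with the $\phi_k$ matching by (i), this forces $\mu_{-j}$ (and hence $\mu_j = \mu_{-j}^{\dag}$) to agree. Iterating up to $j = n$ completes (iii).

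The principal obstacle is the rigorous identification $d\mu^{(n)} = d\nu^{(n)}$ together with the transfer of the preceding Bernstein-Szeg\H{o} theorem to the left picture: the theorem in Section 6 is stated mainly in the R-formulation, and we need the symmetric statement for the L-orthogonal polynomials of $d\lambda$. This should follow either from the equivalence of the three representations of $d\mu^{(n)}$ recorded at the end of Section 6 (which ultimately rests on Theorem~\ref{bernstein_zeros}(2)), or by running the same derivation \emph{mutatis mutandis} for left polynomials. Once this L/R equivalence is in place, everything else is bookkeeping driven by the recursions of Section~4 and the explicit expression of $\alpha_j$ in (\ref{eqn:alph_R}).
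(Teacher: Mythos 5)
Your argument is correct in substance but takes a genuinely different route from the paper for part (i). The paper does not pass through the Bernstein--Szeg\H{o} measure at all: it inverts the transfer matrix $A^{R}(\alpha,z)$ to obtain the inverse Verblunsky recurrences (\ref{monic_recursion_inv_1})--(\ref{monic_recursion_inv_2}), reads the top Verblunsky coefficient off the data in (\ref{eq:Phi_N}), and then marches downward from degree $n$ to degree $0$, producing (i) and (ii) simultaneously for both the R and L families; (iii) is then obtained exactly as in your last paragraph, from $\int \Phi_{j}^{R}\,d\mu = 0$. Your route instead exploits the preceding theorem of Section~6: since $d\mu^{(n)}$ in (\ref{eq:BS_measure}) is a function of $\varphi_{n}^{R}$ alone, the hypothesis forces $d\mu^{(n)} = d\nu^{(n)}$, and chaining the two applications of that theorem gives (i) for the right polynomials (after invoking uniqueness of the orthonormal polynomials with positive definite leading coefficient, which you use tacitly). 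This is non-circular, since the Section~6 theorem does not depend on the present statement, but it delivers only the R half of (i) directly; as you yourself flag, the L half needs either a left version of the Bernstein--Szeg\H{o} theorem or Theorem~\ref{bernstein_zeros}(2). The cleanest way to close that gap within your framework is to reorder the conclusions: your R-only argument already yields (iii) (the moments $\mu_{0},\dots,\mu_{n}$ agree, using only the coefficients of $\Phi_{j}^{R}$ and the orthogonality relations), and the Schur-complement formulas of Section~4 express every $\Phi_{j}^{R}$ and $\Phi_{j}^{L}$ with $j\le n$ explicitly in terms of those moments, which then gives the L half of (i), and hence (ii) via (\ref{eqn:alph_R}), with no further input. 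What the paper's inverse-recursion proof buys is self-containedness (it leans on nothing from Section~6 and treats R and L symmetrically in one pass); what your proof buys is a conceptual explanation of the statement as a rigidity property of the Bernstein--Szeg\H{o} approximant, at the cost of the extra L/R bookkeeping above.
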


\begin{proof}
    The recurrence (\ref{monic_recursion}) can be written in the matrix form
    $$
    \begin{pmatrix}
    \varphi_{j+1}^{R}(z) \\
    \varphi_{j+1}^{L,*}(z) 
    \end{pmatrix}= A^{R}(\alpha_{j},z)\begin{pmatrix}
    \varphi_{j}^{R}(z) \\
    \varphi_{j}^{L,*}(z) 
    \end{pmatrix},
    $$ 
    where
    $$
    A^{R}(\alpha,z) = \begin{pmatrix}
    z \rho^{-R} & -z \alpha \rho^{-L} \\
    - \alpha^{\dag} \rho^{-R}  & \rho^{-L}
    \end{pmatrix},
    $$
    and its inverse for $z \neq 0$,
    $$
    A^{-R}(\alpha,z) = \begin{pmatrix}
    z^{-1}  \rho^{-R} & \rho^{-R}\alpha\\
    z^{-1} \rho^{-L} \alpha^{\dag} & \rho^{-L}
    \end{pmatrix},
    $$
    which gives the inverse Szeg\H{o} relations 
    \begin{eqnarray} 
    \varphi_{j}^{R}(z) &=& z^{-1} \varphi_{j+1}^{R}(z) \rho_{j}^{-R} + z^{-1} \varphi_{j+1}^{L,*}(z) \rho_{j}^{-L} \alpha^{\dag}_j \label{monic_recursion_inv_1},\\
    \varphi_{j}^{L,*}(z) &=& \varphi_{j+1}^{R}(z) \rho_{j}^{-R} \alpha^{\dag}_j + \varphi_{j+1}^{L,*}(z) \rho_{j}^{-L}. \label{monic_recursion_inv_2}
    \end{eqnarray} 
    Then (\ref{eq:Phi_N}) implies $$\alpha_{n}(d\mu) = - \varphi_{n}^{\dag}(z;d\mu)\kappa_{n}^{L/2}= - \varphi_{n}^{\dag}(z;d\nu)\kappa_{n}^{L/2} = \alpha_{n}(d\nu),$$
    and thus by the inverse recursions (\ref{monic_recursion_inv_1}) and (\ref{monic_recursion_inv_2}), we have by iteration that (i) and (ii) hold.\\
    (i) implies (iii) because $\varphi_{j}(z;d\mu)$ and $\mu_{1}, \cdots, \mu_{j-1}$ determine $\mu_{j}(d\mu)$ via $$\int \Phi^{R}_{j}(z)d\mu=0.$$
\end{proof}
\noindent Turning to the weak convergence of $d\mu^{(n)}$:
\begin{theorem}
$d\mu^{(n)}$ is a probability measure on $\mathbb{T}$ for which (\ref{eq:BD_approx_bis}) holds. As $n \rightarrow \infty$, $d\mu^{(n)}\rightarrow d\mu$ weakly.
\end{theorem}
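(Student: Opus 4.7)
The plan is to handle the three assertions in order, with the weak-convergence claim carrying the substantive new content. The first sentence---that $d\mu^{(n)}$ is a probability operator-valued measure on $\mathbb{T}$ satisfying (\ref{eq:BD_approx_bis})---has already been established in the preceding theorem on the Bernstein-Szeg\H{o} approximation, and I would open the proof by simply invoking that result. The remaining task is the weak convergence $d\mu^{(n)} \to d\mu$.

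The driving observation is that $d\mu^{(n)}$ and $d\mu$ agree on a growing family of Fourier moments. Since (\ref{eq:BD_approx_bis}) shows their Verblunsky coefficients coincide through index $n$, the Verblunsky recursion (\ref{szego-recurrence_right})--(\ref{szego-recurrence_left}) propagates this to $\varphi_{k}^{R,L}(z;d\mu^{(n)}) = \varphi_{k}^{R,L}(z;d\mu)$ for $k \le n$, and Theorem \ref{geronimus_thm} then yields
\[
\mu_{j}(d\mu^{(n)}) = \mu_{j}(d\mu), \qquad |j| \le n,
\]
where the $j<0$ case is handled by the symmetry $\mu_{-j} = \mu_{j}^{\dag}$. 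Consequently, for any $\mathcal{L}(\mathcal{H})$-valued trigonometric polynomial $p(\theta) = \sum_{|k| \le m} c_{k} e^{ik\theta}$ of degree $m \le n$, the two integrals coincide:
\[
\frac{1}{2\pi}\int_{-\pi}^{\pi} p(\theta)\,d\mu^{(n)}(\theta) = \sum_{|k| \le m} c_{k}\,\mu_{-k}(d\mu) = \frac{1}{2\pi}\int_{-\pi}^{\pi} p(\theta)\,d\mu(\theta).
\]
A Stone--Weierstrass approximation then upgrades this to convergence against arbitrary continuous test functions: given $\varepsilon > 0$, pick a trigonometric polynomial $p$ with $\|f - p\|_{\infty} < \varepsilon$, and observe that the total mass $\mu_{0}(d\mu^{(n)}) = \mu_{0}(d\mu)$ is independent of $n$, so a triangle-inequality bound gives
\[
\left\| \int f\,d\mu^{(n)} - \int f\,d\mu \right\| \le 2\varepsilon\,\|\mu_{0}(d\mu)\|.
\]
Letting $\varepsilon \to 0$ concludes the argument.

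The main obstacle I anticipate is essentially cosmetic: making precise what \emph{weak} convergence of an operator-valued measure should mean. The natural reading---weak convergence of each scalar measure $\langle d\mu^{(n)}(\cdot)h,h\rangle$ for $h \in \mathcal{H}$, or equivalently convergence of $\int f\,d\mu^{(n)}$ in the weak operator topology for every $f \in C(\mathbb{T})$---is subsumed by the norm-convergence estimate above. The real analytical content sits entirely in Theorem \ref{geronimus_thm}: the rigidity that a single matching orthogonal polynomial forces matching of all lower-order Verblunsky coefficients and moments. Once that is invoked, everything else reduces to routine Fourier approximation on $\mathbb{T}$ together with the uniform total-mass bound afforded by the identity $\mu_{0}(d\mu^{(n)}) = \mu_{0}(d\mu)$.
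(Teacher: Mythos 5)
Your proposal is correct and follows essentially the same route as the paper: invoke part (iii) of Theorem \ref{geronimus_thm} to get agreement of moments up to index $n$, hence agreement of integrals against Laurent polynomials of bounded degree for large $n$, and then pass to all of $C(\mathbb{T})$ by density. Your version is in fact slightly more careful than the paper's, since you make explicit the uniform total-mass bound $\mu_{0}(d\mu^{(n)})=\mu_{0}(d\mu)$ needed to justify the approximation step.
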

\begin{proof}
By using (iii) of Theorem (\ref{geronimus_thm}), for $j=0,1,\cdots,N$, we have $$\mu_{j}(d\mu^{(n)})=\mu_{j}(d\mu).$$
This equation and its conjugate imply that for any Laurent polynomial $f$ (i.e., a polynomial in $z$ and $z^{-1}$), we have
\begin{equation} \label{eq:coef_equality}
  \lim_{N \rightarrow \infty} \int f(e^{i\theta})d\mu^{(n)} = \int f(e^{i\theta})d\mu,
\end{equation}
because the left-hand side is equal to the right-hand side for large enough $N$. Since Laurent polynomials are dense in $C(\mathbb{T})$, equation (\ref{eq:coef_equality}) holds for all $f$. In other words, we have weak convergence.
\end{proof}

\section{The Szeg\H{o} Limit Theorem}

In this section, the main goal is to prove the Szeg\H{o} limit theorem for operator orthogonal polynomials. We emphasize that while operator-valued Szegő limit theorems have been obtained previously via Toeplitz operator factorization, the proof presented here is entirely based on operator-valued orthogonal polynomials and Schur complements.\\

Derevyagin, Holtz, Khrushchev, and Tyaglov (\cite{DerHK} Thm.28, \cite{Sim2} Thm 2.13.5) extended Szeg\H{o}'s theorem to the matrix version. Using their notation, det and tr representing the determinant and trace, respectively, the following limit holds:
\begin{equation} \label{KSz}
\lim_{n \rightarrow{ \infty}}\frac{\mathrm{det}(T_{n})}{\mathrm{det}(T_{n-1})}=\det \prod_0^{\infty} (I - {\alpha}_k {\alpha}_k^{\dag})
= \exp \int \hbox{tr} \ \log \mu'(\theta) d \theta /2 \pi .
\end{equation}
This is known as the Kolmogorov-Szeg\H{o} formula (see e.g. \cite{Bin1}, \S 4). Szeg\H{o}'s condition, which requires that the right-hand side of the equation be positive, holds if and only if
$$
\sum_0^{\infty} || {\alpha}_k^{\dag} {\alpha}_k || < \infty, 
$$
(extending early work of Delsarte, Genin and Kamp \cite{DelGK}).  \\
\noindent The {\it product theorem for determinants} in (\ref{KSz}) above is simple linear algebra in finitely many dimensions, and holds quite generally.\\

Before proving our results in the operator case, let us review some definitions related to infinite determinants \cite{Sim1}. For an operator $A$ that is trace class, the determinant $\mathrm{det}(I-A)$ is well-defined and can be written as
$$
\det(I-A)=\prod_{j=1}^{\infty}(1-\lambda_{j}),
$$ 
where $(\lambda_{j})_{j=1}^{\infty}$ are the eigenvalues of $A$.
On the other hand, if $A$ is a Hilbert-Schmidt operator, we define the second regularized determinant as
\[
\mathrm{det}_{2}(I-A)=\mathrm{det}[(I-A)e^{A}].
\] 
This is based on the observation that $(I-A)e^{A}-I$ is trace class. It is worth noting that when $A$ is trace class, we have
\[
\mathrm{det}_{2}(I-A)=\det(I-A)e^{tr(A)}.\]
Moreover, for two Hilbert-Schmidt operators $A$ and $B$, we have
\begin{equation} \label{eq:reg_det}
     \mathrm{det}_{2}(I-A)\mathrm{det}_{2}(I-B)=\mathrm{det}_{2}[(I-A)(I-B)]e^{tr(AB)}.
\end{equation} 
Let $\mathcal{W}_{1}(\mathcal{H})$, the Wiener algebra over the trace class operators on $\mathcal{H}$, be the set of all operator-valued functions $G$ on $\mathbb{T}$ of the form 
\begin{equation}\label{eq:W1_series_expansion}
G(z)=\sum_{n=-\infty}^{\infty}z^{n}G_{n}, ~~~ z \in \mathbb{T},
\end{equation} 
where $G_{n}$ is a trace class operator on $\mathcal{H}$ for each $n$ and 
\begin{equation}\label{eq:W1_space}
    \sum_{n=-\infty}^{\infty}||G_{n}||_{1} < \infty. 
\end{equation}

In the following theorem, we will prove the Szeg\H{o} Limit Theorem of the Szeg\H{o} theory of orthogonal polynomials and we assume $\mu_{0} = I$.

\begin{theorem}
Let $T^{R}_{n}$ be the Toeplitz operator matrix and $(\alpha_n)_{n}$ be the Verblunsky coefficients of $\mu$.  Then 
$$
\lim_{n \rightarrow{ \infty}}\frac{\mathrm{det}_{2}(T^{R}_{n})}{\mathrm{det}_{2}(T^{R}_{n-1})}=\det \prod_{k=0}^{\infty} (I - {\alpha}_k {\alpha}_k^{\dag}).
$$
\end{theorem}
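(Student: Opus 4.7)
The plan is to reduce the ratio $\det_{2}(T_{n}^{R})/\det_{2}(T_{n-1}^{R})$ to the ordinary Fredholm determinant $\det(\kappa_{n-1}^{R})$ of a single Schur complement, and then to convert this into the infinite product via the Verblunsky identity $(\rho_{n}^{R})^{2} = I - \alpha_{n}\alpha_{n}^{\dag}$ established in Section 4. I would work under the Wiener-algebra hypothesis $\mathcal{W}_{1}(\mathcal{H})$ from (\ref{eq:W1_series_expansion})--(\ref{eq:W1_space}), which ensures that all relevant operators lie in the correct Schatten class for $\det$, $\det_{2}$, and the various trace formulas to be well-defined; in particular $I - T_{n}^{R}$ and $I - \kappa_{n}^{R}$ are trace class on $\mathcal{H}^{n}$ (using $\mu_{0}=I$, so the diagonal blocks of $I-T_{n}^{R}$ vanish).

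First, I would apply the Frobenius-Schur factorization (\ref{schurcomplement}) to write $T_{n+1}^{R} = L_{n} D_{n} U_{n}$ with $D_{n} = \mathrm{diag}(T_{n}^{R}, \kappa_{n}^{R})$ and $L_{n} = I + N_{L}$, $U_{n} = I + N_{U}$ block unitriangular whose strictly off-diagonal parts satisfy $N_{L}^{2} = N_{U}^{2} = 0$. The nilpotency forces $(I + N_{L})(I - N_{L}) = I$, so $\det_{2}(L_{n}) = \det_{2}(U_{n}) = 1$; block-diagonality together with $\det_{2}(I - \mathrm{diag}(X_{1},X_{2})) = \det_{2}(I - X_{1})\det_{2}(I - X_{2})$ gives $\det_{2}(D_{n}) = \det_{2}(T_{n}^{R})\det_{2}(\kappa_{n}^{R})$.

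Next, I would apply the regularized-determinant multiplication rule (\ref{eq:reg_det}) twice, to $T_{n+1}^{R} = L_{n}(D_{n}U_{n})$ and to $D_{n}U_{n}$. This produces two exponential trace corrections. A direct block calculation shows that the first, $\mathrm{tr}\bigl((I - L_{n})(I - D_{n})\bigr)$, vanishes because the product is strictly off-diagonal, while the second equals $\mathrm{tr}(\nu_{n}^{*}T_{n}^{-R}\nu_{n}) = \mathrm{tr}(I - \kappa_{n}^{R})$. Since $I-\kappa_n^R$ is trace class, the defining relation $\det_{2}(\kappa_{n}^{R}) = \det(\kappa_{n}^{R})\,e^{\mathrm{tr}(I - \kappa_{n}^{R})}$ cancels this second correction exactly, leaving the clean identity
\[
\frac{\det_{2}(T_{n+1}^{R})}{\det_{2}(T_{n}^{R})} = \det(\kappa_{n}^{R}).
\]

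Finally, I would combine $\rho_{n}^{R} = \kappa_{n+1}^{R/2}\kappa_{n}^{-R/2}$ with $(\rho_{n}^{R})^{2} = I - \alpha_{n}\alpha_{n}^{\dag}$ and the cyclic property of the Fredholm determinant to obtain $\det(\kappa_{n+1}^{R})/\det(\kappa_{n}^{R}) = \det\bigl((\rho_{n}^{R})^{2}\bigr) = \det(I - \alpha_{n}\alpha_{n}^{\dag})$; iterating from $\kappa_{0}^{R} = \mu_{0} = I$ yields $\det(\kappa_{n-1}^{R}) = \prod_{k=0}^{n-2}\det(I - \alpha_{k}\alpha_{k}^{\dag})$. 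Passing to the limit and using multiplicativity of the Fredholm determinant on trace class operators converts this into $\det\prod_{k=0}^{\infty}(I - \alpha_{k}\alpha_{k}^{\dag})$, which is the desired limit. The principal obstacle I expect is the bookkeeping in the middle step: rigorously verifying that each operator appearing inside $\det$, $\det_{2}$, and $\mathrm{tr}$ lies in the required Schatten class on $\mathcal{H}^{n+1}$, and that the two trace corrections combine to produce exactly the cancellation above with no residue. The hypothesis $\mathcal{W}_{1}(\mathcal{H})$ is essentially designed to make this verification routine.
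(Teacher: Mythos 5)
Your proposal follows essentially the same route as the paper: both reduce $\det_2(T^R_{n+1})/\det_2(T^R_{n})$ to the ordinary determinant $\det(\kappa_n^R)$ of the Schur complement via the Frobenius--Schur factorization and the multiplication rule (\ref{eq:reg_det}), with the exponential trace correction cancelling against $\det_2(\kappa_n^R)=\det(\kappa_n^R)\,e^{\mathrm{tr}(\nu_n^*T_n^{-R}\nu_n)}$, and then convert $\det(\kappa_n^R)$ into $\prod_k\det(I-\alpha_k\alpha_k^{\dag})$ through $(\rho_k^R)^2=I-\alpha_k\alpha_k^{\dag}$. You merely make explicit the block factorization and Schatten-class bookkeeping that the paper's terser argument leaves implicit.
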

\begin{proof}
    Using (\ref{eq:reg_det}), it follows that 
    \begin{eqnarray*}
        \mathrm{det}_{2}(T^{R}_{n}) &=& \mathrm{det}_{2}(T^{R}_{n-1})\mathrm{det}_{2}(I - \nu^{*}_{n} T^{-R}_{n}\nu_{n}) \exp({\mathrm{tr}(\nu^{*}_{n} T^{-R}_{n}\nu_{n})})\\
        &=&\mathrm{det}_{2}(T^{R}_{n-1})\mathrm{det}(I - \nu^{*}_{n} T^{-R}_{n}\nu_{n})\\
        &=& \mathrm{det}_{2}(T^{R}_{n-1}) \mathrm{det}(\kappa^{-R}_{n})=\mathrm{det}_{2}(T^{R}_{n-1}) \mathrm{det}(\rho_{0}^{R} \dots \rho_{n-1}^{R})^{2},     
    \end{eqnarray*}
    which leads to $$\mathrm{det}_{2}(T^{R}_{n})/\mathrm{det}_{2}(T^{R}_{n-1}) = \det \prod_{k=0}^{n-1} (I - {\alpha}_k {\alpha}_k^{\dag}). $$
    Since $T^{R}_{n}$ is definite positive, the coefficients $\alpha_{k}$ are Hilbert-Schmidt and strict contractions. This implies that ${\alpha}_k {\alpha}_k^{\dag}$ are trace class operators, and thus $\det (I - {\alpha}_k {\alpha}_k^{\dag})$ are well defined.
    Letting $n \rightarrow \infty$, the statement follows.
\end{proof}

As in Gohberg and Kaashoek \cite{GohK}, we assume in the theorem below that the measure $\mu'$ is in the Wiener algebra $\mathcal{W}_{1}(\mathcal{H})$.

\begin{theorem}
Assume $\mu'(z)=\sum_{j=-\infty}^{+\infty}\mu_{j}z^{j}\in I + \mathcal{W}_{1}(\mathcal{H})$ for $z \in \mathbb{T}$. Then 
 \begin{equation} \label{szego_thm}
      \lim_{n \rightarrow{ \infty}}\frac{\mathrm{det_{2}}(T^{R,L}_{n})}{\mathrm{det_{2}}(T^{R,L}_{n-1})}
 = \exp \frac{1}{2\pi}\int_{-\pi}^{\pi} \log(\det(\mu'(\theta))d\theta. 
 \end{equation}
\end{theorem}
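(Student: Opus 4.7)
The previous theorem (applied to $T^R$, and analogously to $T^L$) gives
$$
\lim_{n\to\infty}\frac{\det_2 T_n^{R,L}}{\det_2 T_{n-1}^{R,L}} = \det\prod_{k=0}^{\infty}(I-\alpha_k\alpha_k^{\dag}),
$$
so the claim reduces to the operator Kolmogorov--Szeg\H{o} identity
$$
\det\prod_{k=0}^{\infty}(I-\alpha_k\alpha_k^{\dag}) = \exp\Bigl(\tfrac{1}{2\pi}\int_{-\pi}^{\pi}\log\det\mu'(\theta)\,d\theta\Bigr). \qquad (\star)
$$
I plan to prove $(\star)$ by first establishing an exact finite-$n$ version for the Bernstein--Szeg\H{o} approximants $d\mu^{(n)}$ of Section~6, then passing to the limit using the hypothesis $\mu'\in I+\mathcal{W}_1(\mathcal{H})$.

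\textbf{Step 1 (exact identity for $\mu^{(n)}$).} Using the representation $\mu'^{(n)}(\theta)=[\varphi_n^{R,*}(e^{i\theta})^{\dag}\varphi_n^{R,*}(e^{i\theta})]^{-1}$ together with $\det(A^{\dag}A)=|\det A|^2$, one has $\log\det\mu'^{(n)}(\theta)=-2\log|\det\varphi_n^{R,*}(e^{i\theta})|$. Theorem \ref{bernstein_zeros}, combined with the polynomial structure of $\varphi_n^{R,*}$, ensures invertibility on $\bar{\mathbb{D}}$, so $z\mapsto\log\det\varphi_n^{R,*}(z)$ is a scalar analytic function on $\bar{\mathbb{D}}$ and its real part is harmonic. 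The mean value property (Jensen's formula for a nonvanishing analytic function) yields
$$
\tfrac{1}{2\pi}\int_{-\pi}^{\pi}\log|\det\varphi_n^{R,*}(e^{i\theta})|\,d\theta = \log|\det\varphi_n^{R,*}(0)| = \log\det\kappa_n^{-R/2} = -\tfrac12\log\det\kappa_n^R,
$$
where I used $\varphi_n^{R,*}(0)=\kappa_n^{-R/2}\Phi_n^{R,*}(0)=\kappa_n^{-R/2}$ (the identity $(P\alpha)^*=\alpha^{\dag}P^*$ applied to $\varphi_n^R=\Phi_n^R\kappa_n^{-R/2}$, together with $\Phi_n^{R,*}(0)=I$). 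Combining with $\kappa_n^R=(\rho_0^R\cdots\rho_{n-1}^R)^2$ and $\rho_k^R=(I-\alpha_k\alpha_k^{\dag})^{1/2}$, so that $\det\kappa_n^R=\prod_{k=0}^{n-1}\det(I-\alpha_k\alpha_k^{\dag})$, gives
$$
\exp\Bigl(\tfrac{1}{2\pi}\int_{-\pi}^{\pi}\log\det\mu'^{(n)}(\theta)\,d\theta\Bigr) = \prod_{k=0}^{n-1}\det(I-\alpha_k\alpha_k^{\dag}). \qquad (\star\star)
$$

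\textbf{Step 2 (passage to the limit).} The right-hand side of $(\star\star)$ converges to $\det\prod_{k=0}^{\infty}(I-\alpha_k\alpha_k^{\dag})$ as $n\to\infty$. For the left-hand side, the hypothesis $\mu'\in I+\mathcal{W}_1(\mathcal{H})$ equips $\mu'$ with an operator Szeg\H{o} factorization $\mu'=DD^{\dag}$ where $D$ is outer on $\bar{\mathbb{D}}$ and $D,D^{-1}$ lie in the trace-class Hardy--Wiener algebra. A standard estimate in this algebra then yields $\varphi_n^{R,*}(z)^{-1}\to D(z)$ uniformly on $\bar{\mathbb{D}}$ in trace norm, so $\log\det\mu'^{(n)}(\theta)\to\log\det\mu'(\theta)$ uniformly in $\theta$. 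Taking the exponential and combining with $(\star\star)$ yields $(\star)$, and together with the previous theorem this gives the claim.

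\textbf{Main obstacle.} The delicate point is the convergence $\int\log\det\mu'^{(n)}\to\int\log\det\mu'$ in Step~2: the weak convergence $d\mu^{(n)}\to d\mu$ from Section~6 is far too weak, since $\log\det$ is unbounded near singular operators and not continuous in the weak topology on positive operator-valued measures. The Wiener-algebra hypothesis is precisely the quantitative input needed to obtain trace-norm control on $\mu'-\mu'^{(n)}$ (via the outer factorization and absolute convergence of Fourier coefficients), after which dominated (indeed uniform) convergence under $\log\det$ closes the argument.
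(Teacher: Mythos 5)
Your Step 1 is correct and takes a genuinely different route from the paper's proof. You establish the exact finite-$n$ identity $(\star\star)$ by applying the mean-value property of the harmonic function $\log|\det\varphi_n^{R,*}|$ at level $n$, using $\varphi_n^{R,*}(0)=\kappa_n^{-R/2}$ and $\det\kappa_n^{R}=\prod_{k=0}^{n-1}\det(I-\alpha_k\alpha_k^{\dag})$, and only then pass to the limit. The paper does it in the opposite order: it first passes to the limit in $\Phi_n^{L,*}=I+F_n$, obtains the factorization $\kappa_{\infty}^{R}=(I+F)^{\dag}\mu'(I+F)$ on $\mathbb{T}$, and then kills the two cross terms $\Delta,\bar{\Delta}$ by exactly the harmonicity/Cauchy argument you use at finite $n$. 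The two proofs thus share the same two ingredients (a Szeg\H{o}-type factorization and the vanishing of the mean of $\log\det$ of a nonvanishing analytic operator function); your ordering has the advantage that the finite-$n$ identity is exact and requires no limiting factorization to state.

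The genuine gap is Step 2. The claim that $\mu'\in I+\mathcal{W}_1(\mathcal{H})$ with $\mu'>0$ admits an outer factorization $\mu'=DD^{\dag}$ with $D,D^{-1}$ in the analytic trace-class Wiener algebra is a nontrivial Wiener--Hopf/Gohberg--Krein factorization theorem, and the assertion that $\varphi_n^{R,*}(z)^{-1}\to D(z)$ uniformly on $\bar{\mathbb{D}}$ in trace norm is precisely the operator analogue of Szeg\H{o} asymptotics --- it is the analytic heart of the theorem, not a ``standard estimate'' that can be quoted. Without it, $(\star\star)$ together with the previous theorem only shows that both sides of $(\star)$ are limits of the same sequence $\det\kappa_n^{R}$ \emph{provided} $\int\log\det\mu'^{(n)}\to\int\log\det\mu'$, and, as you yourself observe, nothing in Sections 4--6 supplies that convergence. (The paper's own proof has the same soft spot: its ``passing to the limit'' in $\kappa_{\infty}^{R}=(I+F)^{\dag}\mu'(I+F)$ is asserted rather than proven. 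But a complete proof along your lines must either derive the trace-norm convergence of $\varphi_n^{R,*}(z)^{-1}$ from the summability hypothesis $\sum_n\|\mu_n\|_1<\infty$, or cite a precise operator-valued factorization/asymptotics result, e.g. from Gohberg--Kaashoek, rather than gesture at one.) Two smaller points in Step 1 should also be made explicit: $\det\varphi_n^{R,*}(z)$ is only defined because the non-identity part of $\varphi_n^{R,*}$ has trace-class coefficients (which uses the hypothesis on the moments $\mu_j$, $j\neq 0$), and it is the invertibility of $\varphi_n^{R,*}$ on all of $\bar{\mathbb{D}}$ --- not merely on $\mathbb{T}$ --- that licenses choosing a single-valued analytic branch of $\log\det\varphi_n^{R,*}$.
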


\begin{proof}
 Write
    \[ 
    L =
    \begin{pmatrix}
    0 & \cdots & 0 & I\\
    0 &  \cdots & I & 0\\
    \vdots & I & \vdots & \vdots\\
    I & 0 & \cdots & 0
    \end{pmatrix},
    \qquad
    \nu_{n}=
    \begin{pmatrix}
    \mu_{-1}\\
    \mu_{-2}\\
    \vdots \\
    \mu_{-n}
    \end{pmatrix}.
    \]
    Then as $T^{R}_{n} = L T^{L}_{n} L$,
    \begin{eqnarray*}
    \Phi_{n}^{L,*} (z)
    &=& z^{n}(z^{-n}I-[I \ z^{-1}I \ \cdots \ z^{-n+1}I] \ T^{-L}_{n}{\nu}_{n}\\
    &=& I - [I~~zI~~ \cdots ~~ z^{n}I]L T^{-L}_{n} L \phi\\
    &=& I - [I~~zI~~ \cdots ~~ z^{n}I]T^{-R}_{n} \phi,\\
    &=& I + F_n(z),
    \end{eqnarray*}
    where $F_n(z) =-[I~~zI~~ \cdots ~~ z^{n}I]T^{-R}_{n} \phi$ and let $F(z) = \sum_{n=1}^{\infty}z^{n}F_{n}$.  \\
    
    Here, we can easily prove that the operator polynomial $F$ is trace class (since its coefficients are the sum of the product of two positive definite Hilbert-Schmidt operators). So $\det \Phi_{n}^{L,*} (z)$ and $\det (\Phi_{n}^{L,*})^{\dag} (z)$ are well-defined.\\
    
    Also, using the inequality (\ref{eq:W1_space}), we can conclude that the series on the right-hand side of (\ref{eq:W1_series_expansion}) converges in the trace class norm. Consequently, if we define $\mu'(\cdot) = I - \tilde{\mu}(\cdot)$ with $\tilde{\mu}$ belonging to the Wiener class $\mathcal{W}_{1}(\mathcal{H})$, then $\tilde{\mu}(z)$ is a trace class operator for all $z \in \mathbb{T}$. This means that $\det (\mu'(z))$ is well-defined for each $z \in \mathbb{T}$, and in particular, the expression $\det (\mu'(z))$ on the right-hand side of (\ref{szego_thm}) is well-defined.\\
    
    From (\ref{eq:BS_measure}) $\mu'^{
    (n)}=[\Phi_{n}^{L,*} \kappa_{n}^{-R}(\Phi_{n}^{L,*})^{\dag}]^{-1}$, and passing to the limit, we have $$\kappa_{\infty}^{R} = (I + F(z))^{\dag} \mu'(z) (I + F(z)).$$    
   It follows that  $$\log \det \kappa_{\infty}^{R}=
    M + 
    \frac{1}{2\pi}\int_{-\pi}^{\pi}\log \det (\mu'(\theta)) d\theta + \Bar{M},
    $$
    where 
    $$
    M = \frac{1}{2\pi}\int_{-\pi}^{\pi}\log \det (I + F(e^{it}))dt, \qquad
    \Bar{M} = \frac{1}{2\pi}\int_{-\pi}^{\pi}\log \det (I + F(e^{it}))^{\dag}dt.
    $$
    So $\det (I + F(\cdot)) $ is analytic on $|z|<1$, and $\det \Phi_{n}^{L,*}(\theta) $ is continuous and non-zero on $|z|\leq 1$. Combining, $\log \det (I + F(\cdot)) $ is analytic on $|z|<1$ and continuous on $|z|\leq 1$. Now Cauchy's theorem gives 
    $$
    M = \frac{1}{2\pi}\int_{-\pi}^{\pi}\log \det (I + F(e^{it}))dt = 0.
    $$
    Similarly for $\bar{M}=0$. Thus, we have the result.

\end{proof}

\section*{Declarations}
\ni \textbf{Funding} The authors received no specific funding for this research.\\

\ni \textbf{Conflict of interest} All authors declare that they have no conflict of interest

\end{document}